\begin{document}
\newtheorem{Assumption}{Assumption}
\newtheorem{Theorem}{Theorem}[section]
\newtheorem{Proposition}[Theorem]{Proposition}
\newtheorem{Remark}[Theorem]{Remark}
\newtheorem{Lemma}[Theorem]{Lemma}
\newtheorem{Corollary}[Theorem]{Corollary}
\newtheorem{Definition}[Theorem]{Definition}
\newtheorem{Example}[Theorem]{Example}
\newtheorem{Algorithm}[Theorem]{Algorithm}
\hypersetup{colorlinks=true,urlcolor=blue}
\expandafter\let\expandafter\oldproof\csname\string\proof\endcsname
\let\oldendproof\endproof
\renewenvironment{proof}[1][\proofname]{
\oldproof[\ttfamily\scshape \bf #1.]
}{\oldendproof}
\def\tilde{\widetilde}
\def\emp{\emptyset} \def\conv{{\rm conv}\,} \def\dist{{\rm dist}}
\def\dom{{\rm dom}\,} \def\span{{\rm span}\,} \def\epi{{\rm epi\,}}
\def\rge{{\rm rge\,}} \def\om{\ominus} 
\def\lra{\longrightarrow} \def\Lra{\Longrightarrow}
\def\Lm{{\Lambda}} \def\d{{\rm d}} \def\sub{\partial} \def\B{\mathbb
B} \def\L{{\mathscr{L}}} \def\o{\overline} \def\oRR{\overline{\RR}}
\def\ox{\overline{x}} \def\ov{\overline{y}} \def\oz{\overline{z}}
\def\cl{{\rm cl}\,} \def\Ra{\Rightarrow} \def\disp{\displaystyle}
\def\tto{\rightrightarrows} \def\Hat{\widehat}
\def\Tilde{\widetilde} \def\Bar{\overline}
\def\pa{\partial}
\def\ra{\rangle}
\def\la{\langle} \def\ve{\varepsilon}
\def\e{\varepsilon}
\def\ox{\bar{x}} 
\def\oy{\bar{y}} \def\ov{\bar{v}} \def\oz{\ox}
\def\col{\colon\!\!\!\!} \def\ov{\bar{v}} \def\ow{\bar{w}}
\def\ou{\bar{u}} \def\ot{\bar{t}} \def\op{\bar{p}} \def\oq{\bar{q}}
\def\co{\mbox{\rm co}\,} \def\cone{\mbox{\rm cone}\,}
\def\ri{\mbox{\rm ri}\,} \def\inte{\mbox{\rm int}\,}
\def\gph{\mbox{\rm gph}\,} \def\epi{\mbox{\rm epi}\,}
\def\dim{\mbox{\rm dim}\,} \def\dom{\mbox{\rm dom}\,}
\def\ker{\mbox{\rm ker}\,} \def\proj{\mbox{\rm proj}\,}
\def\lip{\mbox{\rm lip}\,} \def\reg{\mbox{\rm reg}\,}
\def\aff{\mbox{\rm aff}\,} \def\clco{\mbox{\rm clco}\,}
\def\cl{\mbox{\rm cl}\,} \def\rank{\mbox{\rm rank}\,}
\def\h{\hfill\triangle} \def\dn{\downarrow} \def\O{\Omega}
\def\ph{\varphi} \def\emp{\emptyset} \def\st{\stackrel}
\def\oR{\Bar{\R}} \def\lm{\lambda} \def\gg{\gamma} \def\dd{\delta}
\def\al{\alpha} \def\Th{\Theta} \def\N{{\rm I\!N}}
\def\R{{\rm I\!R}}
\def\th{\theta} \def\vt{\vartheta} \def\e{\varepsilon} \def \b{{\}_{k\in\N}}}
\def\Limsup{\mathop{{\rm Lim}\,{\rm sup}}}
\def\Liminf{\mathop{{\rm Lim}\,{\rm inf}}}
\def\Lim{\mathop{{\rm Lim}\,{\rm }}}
\def\sm{\hbox{${1\over
2}$}} \def\varepsilonrl{\;\rule[-0.4mm]{0.2mm}{0.27cm}\;}
\def\varepsilonrll{\;\rule[-0.7mm]{0.2mm}{0.37cm}\;}
\def\varepsilonrlm{\;\rule[-0.5mm]{0.2mm}{0.33cm}\;}
\def\Limsup{\mathop{{\rm Lim}\,{\rm sup}}}
\newcommand{\hoac}[1]{
\left[\begin{aligned}#1\end{aligned}\right.}
\newcommand{\heva}[1]{
\left\{\begin{aligned}#1\end{aligned}\right.}
\numberwithin{equation}{section}
\newenvironment{sol}{\paragraph{Solution.}}{\hfill$\square$}
\newenvironment{cm}{\paragraph{Proof.}}{\hfill$\square$}
\title{\bf
Relationships between Global and Local Monotonicity of Operators}

\author{P.D. Khanh\footnote{Department of Mathematics, Ho Chi Minh City University of Education, Ho Chi Minh City, Vietnam. E-mail: pdkhanh182@gmail.com, khanhpd@hcmue.edu.vn.}\;\;V.V.H. Khoa\footnote{Department of Mathematics, Wayne State University, Detroit, Michigan, USA. E-mail: khoavu@wayne.edu. Research of this author was partly supported by the  US National Science Foundation under grant DMS-2204519.}\;\;J.E. Mart\'inez-Legaz \footnote{Departament d’Economia i d’Hist\`oria Econ\`omica, Universitat Aut\`onoma de Barcelona, and BGSMath, Barcelona, Spain. Email: JuanEnrique.Martinez.Legaz@uab.cat. Research of this author was partly supported by grant PID2022-136399NB-C22 from MICINN, Spain, and by ERDF, ``A way to make Europe”, European Union.}\;\;B.S. Mordukhovich\footnote{Department of Mathematics, Wayne State University, Detroit, Michigan, USA. E-mail: aa1086@wayne.edu. Research of this author was partly supported by the  US National Science Foundation under grant DMS-2204519, by the Australian Research Council under Discovery Project DP190100555, and by the Project 111 of China under grant D21024.} \\[3ex]
{\bf Dedicated to Terry Rockafellar, with admiration}}
\maketitle

\noindent
{\small{\bf Abstract}. The paper is devoted to establishing relationships between global and local monotonicity, as well as their maximality versions, for single-valued and set-valued mappings between finite-dimensional and infinite-dimensional spaces. We first show that for single-valued operators with convex domains in locally convex topological spaces, their continuity ensures that their global monotonicity agrees with the local one around any point of the graph. This also holds for set-valued mappings defined on the real line under a certain connectedness condition. The situation is different for set-valued operators in multidimensional spaces as demonstrated by an example of locally monotone operator on the plane that is not globally monotone. Finally, we invoke coderivative criteria from variational analysis to characterize both global and local maximal monotonicity of set-valued operators in Hilbert spaces to verify the equivalence between these monotonicity properties under the closed-graph and global hypomonotonicity assumptions.
\\[1ex]
{\bf Keywords}. globally and locally monotone operators, maximal global and local monotonicity, variational analysis and generalized differentiation, coderivatives \\[1ex]
{\bf Mathematics Subject Classification (2020)} 26A15, 47H05, 49J53, 49J53, 54D05

\section{Introduction}\label{sec:intro}
\vspace*{-0.05in}

It has been well recognized in mathematics, especially in the area of nonlinear, convex, and variational analysis, that the notion of {\em globally monotone} for single-valued and set-valued operators, and particularly of its {\em maximality} version, plays a crucial role in the study and applications of a variety of theoretical and numerical issues. The contributions of Terry Rockafellar to both of these issues are monumental. We also refer the reader to, e.g., the books \cite{BC,Mordukhovich18,phelps,Simons08,Rockafellar98,thibault} and the extended bibliographies therein for various aspects of global monotonicity of operators and their numerous applications in finite-dimensional and infinite-dimensional spaces.

In the area of variational analysis, which mainly deals with {\em local} behaviors of functions and mappings, the global versions of monotonicity  and its maximality are  usually superfluous. In many contexts, one would require monotonicity behavior only within a neighborhood of some graphical point, and that is when {\em local monotonicity} arises.

Similarly to global monotonicity, the {\em local maximal monotonicity} of operators plays a central role in variational theory and applications. But in contrast to global maximal monotonicity, the two versions of local maximal monotonicity of set-valued mappings/multifunctions have been introduced and exploited in the variational analysis and optimization literature. The first one goes back to the paper by Poliquin and Rockafellar \cite{poli}, where it appeared in the study of {\em tilt stability} of local minimizers. The second notion of local maximal monotonicity should be credited to Pennanen \cite{Pen02} who employed it in developing powerful convergence results for the proximal point and related algorithms of finite-dimensional optimization. It has been recently proved in 
\cite{KKMP23} that both versions of local maximal monotonicity of multifunctions are equivalent in reflexive Banach spaces admitting an equivalent norm with some geometric properties. Various applications of local maximal monotonicity to both theoretical and numerical aspects of variational analysis and optimization can be found in, e.g., \cite{KKMP23,KKMP24,kmp23,mor24,Mordukhovich18,MordukhovichNghia1,rtr251,rtr257,roc23} and other recent publications. Among the most important applications of local monotonicity and its maximality version for subgradient operators, we mention {\em variational convexity} of extended-real-valued functions, the notion introduced by Rockafellar in \cite{rtr257} and then largely investigated and employed in variational analysis and optimization-related areas in \cite{KKMP23,KKMP24,mor24,rtr257,roc23,ding}.\vspace*{0.03in}

Very natural questions arise on establishing relationships between {\em local and global monotonicity} of single-valued and set-valued operators and between their {\em maximality} versions. It is not for the first time when relationships between local and global properties are under investigation in mathematics; in particular, for problems of convex analysis. For instance, the famous theorem obtained independently by Tietze \cite{Tietze} and Nakajima \cite{Nakajima} in 1928 states that any closed, connected, and locally convex subset of an Euclidean space is necessarily convex. 

The goal of this paper is to establish both positive and negative results concerning (maximal) local and global monotonicity of single-valued and set-valued operators in finite and infinite dimensions. By {\em closing the gap} between the global and local monotonicity properties of operators, as well as between the corresponding maximality versions, we understand clarifying whether the global (maximal) monotonicity of the operator in question is equivalent to its local (maximal) monotonicity property around any point of the operator graph.

First we close the gap between the local and global monotonicity, even without maximality, for {\em continuous single-valued} operators with {\em convex domains} in locally convex topological spaces. Then we show that, while the above equivalence holds for {\em univariate set-valued mappings} $T\colon\R\tto\R$ with {\em path-connected graphs}, it {\em fails} for higher-dimensional operators $T\colon\R^2\tto\R^2$. 

A rather surprising result is obtained for the {\em maximal monotonicity} of set-valued operators in Hilbert spaces. Involving powerful tools of {\em generalized differentiation} in variational analysis, we prove that the local maximal monotonicity of operators with {\em closed graphs} agrees with the global one provided the fulfillment of the {\em global hypomonotonicity} property.\vspace*{0.03in} 

The rest of our paper is organized as follows. Section~\ref{sec:monotone} recalls the notions of global and local monotonicity of set-valued mappings, together with their maximality versions, and discusses in more details the local maximal monotonicity of operators in reflexive Banach spaces. 

In Section~\ref{sec:single}, we establish the equivalence between local and global monotonicity of continuous single-valued mappings defined on locally convex topological vector spaces and present two independent proofs of the obtained equivalence.

Sections~\ref{sec:set} and \ref{sec:not global} are devoted to the relationships between local and global monotonicity of set-valued operators defined on finite-dimensional spaces. While the main result of Section~\ref{sec:set} solves the equivalence between the local and global monotonicity in the affirmative for univariate operators with path-connected graphs, Section~\ref{sec:not global} presents a counterexample for such an equivalence in the case of a multifunction with the path-connected graph in $\R^4$.  

Section~\ref{sec:max} addresses maximal monotonicity of set-valued operators defined in Hilbert spaces. Based on the powerful coderivative criteria for the global and local maximal monotonicity of closed-graph mappings, we reveal the equivalence between these notions under global hypomonotonicity, which is shown to be essential for the obtained equivalence. The final Section~\ref{sec:conc} presents concluding remarks and discusses some unsolved problems for the future research.\vspace*{0.03in}

Throughout the paper, we often consider the $n$-dimensional space $\R^n$ with the usual inner product $\la x,y\ra :=\sum_{i=1}^n x_i y_i$ for $x=(x_1,\ldots,x_n)$ and $y=(y_1,\ldots,y_n)$, and the induced norm $\|x\|:= \sqrt{\la x,x\ra}$. Regarding a topological vector space $X$ and its dual $X^*$, the value of a functional $x^*\in X^*$ at $x\in X$ is denoted by $\la x^*,x \ra$. Given a set-valued mapping $T: X\rightrightarrows X^*$, denote by
\begin{eqnarray*}
\dom T:=\big\{x\in X\;\big|\;T(x)\ne\emp\big\}\;\mbox{ and }\;\gph T:=\big\{(x,x^*)\in X\times X^*\;\big|\;x^*\in T(x)\big\}
\end{eqnarray*}
its {\em domain} and {\em graph}, respectively. The mapping $T$ is said to be \textit{proper} if $\dom T \neq \emp$, which is always assumed in what follows.

\section{Global and Local Monotonicity of Multifunctions}\label{sec:monotone}

In this section, we recall and discuss the definitions of global and local monotonicity of set-valued mappings/operators/multifunctions together with their maximality version. For a proper operator $T:X \rightrightarrows X^*$ defined on a topological vector space, it is said that:

{\bf(i)} $T$ is {\em monotone} on $X$ if 
\begin{eqnarray}\label{eq:monotone}
\la x^*-y^*, x- y \ra \ge 0 \;\text{ whenever }\; (x,x^*),(y,y^*)\in \gph T.
\end{eqnarray}

{\bf(ii)}  $T$ is {\em maximal monotone} on $X$ if \eqref{eq:monotone} is fulfilled and moreover, the graph of $T$ cannot be properly contained in the graph of any other monotone operator, i.e., $\gph T = \gph S$ for any monotone mapping $S:X\rightrightarrows X^*$ satisfying $\gph T \subset \gph S$.\vspace*{0.05in}

Upon localizing the  global monotonicity notion, we restrict the fulfillment of \eqref{eq:monotone} within a neighborhood of a graph point as follows.

{\bf(iii)} Given $(u,u^*)\in \gph T$, $T$ is  \textit{locally monotone around} $(u,u^*)$ if there exists a neighborhood $W\subset X\times X^*$ of $(u,u^*)$ such that 
\begin{eqnarray}\label{eq:local-monotone}
\la x^*-y^*, x- y \ra \ge 0 \;\text{ for all }\; (x,x^*),(y,y^*)\in \gph T \cap W.
\end{eqnarray}
Observe that the local monotonicity of set-valued mappings enjoys the following {\em robustness property}: if $T$ is locally monotone around a graph point $(u,u^*)$ corresponding to a neighborhood $W$, then the local monotonicity of $T$ around any point $(x,x^*)$ in $W$ is fulfilled. It is said in this case that $T$ is {\em locally monotone relative to} the neighborhood $W$. 

Regarding local {\em maximal} monotonicity, there exist the following two versions, which are used in the literature; see \cite{KKMP23} for more discussions.

{\bf(iv)} $T$ is {\em locally maximal monotone of type $(A)$} around $(u,u^*)\in \gph T$ if there exists a neighborhood $W$ of $(u,u^*)$ such that $T$ is monotone with respect to $W$ and that $\gph T\cap W=\gph S\cap W$ for any mapping $S:X\rightrightarrows X^*$, which is monotone with respect to $W$ and satisfies the inclusion $\gph T\cap W\subset \gph S \cap W$.

{\bf(v)} $T$ {\em locally maximal monotone of type $(B)$} around $(u,u^*)\in \gph T$ if there exist a maximal monotone mapping $\overline{T}:X\rightrightarrows X^*$ and a neighborhood $W$ of $(u,u^*)$ for which 
\begin{equation*}
\gph \overline{T}\cap W = \gph T \cap W.
\end{equation*}

It is not difficult to observe that if $T:X\rightrightarrows X^*$ is locally maximal monotone of type $(A)$ around $(u,u^*)$, then $T$ is locally maximal monotone of type $(B)$ around this point.
As follows from \cite[Theorem~3.3]{KKMP23}, both local maximal monotonicity types (A) and (B) agree if $X$ is a {\em reflexive Banach space}. Indeed, each reflexive Banach space admits an equivalent norm satisfying the requirements of \cite[Theorem~3.3]{KKMP23}, while the formulations of local maximal monotonicity properties in (iv) and (v) do not depend on renorming.

In the rest of the paper, we use---for the general setting of locally convex topological spaces---only the local maximal monotonicity notion of type (B), without indicating the type of local maximal monotonicity of multifunctions. 

\section{Relationships between Global and Local Monotonicity of Single-Valued Operators}\label{sec:single}

This section deals with the monotonicity of operators $T\colon X\supset\dom T\to X^*$, which are defined on locally convex topological vector spaces while being {\em single-valued on their domain}. The main result of this section is as follows.

\begin{Theorem}\label{theo:single}
Let $X$ be a locally convex topological vector space. Assume that  $T\colon\dom T\rightarrow X^*$ has a convex domain and that $T$ is continuous relative to any segment in $\dom T$. Then the local monotonicity of $T$ around any graph point is equivalent to the global monotonicity of $T$ on $X$.      
\end{Theorem}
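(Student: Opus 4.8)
The plan is to prove the nontrivial implication: assuming $T$ is locally monotone around every graph point, deduce global monotonicity. The reverse implication is immediate, since global monotonicity trivially restricts to any neighborhood. So fix $x,y\in\dom T$ and, since $\dom T$ is convex, consider the segment $\sigma(t):=(1-t)x+ty$, $t\in[0,1]$, which lies entirely in $\dom T$. The idea is to propagate the infinitesimal monotonicity inequality along this segment by a compactness/connectedness argument on $[0,1]$. Define the real-valued function $\varphi(t):=\la T(\sigma(t))-T(x),\sigma(t)-x\ra=(t)\la T(\sigma(t))-T(x),y-x\ra$; global monotonicity between $x$ and $y$ is exactly $\varphi(1)\ge 0$, and more generally I want $\la T(\sigma(s))-T(\sigma(t)),\sigma(s)-\sigma(t)\ra\ge 0$ for all $s,t\in[0,1]$.

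First I would use local monotonicity together with continuity of $T$ along the segment to get a \emph{local} statement: for each $t_0\in[0,1]$ there is $\delta>0$ such that $\la T(\sigma(s))-T(\sigma(t)),\sigma(s)-\sigma(t)\ra\ge 0$ whenever $s,t\in(t_0-\delta,t_0+\delta)\cap[0,1]$. This is because the map $t\mapsto(\sigma(t),T(\sigma(t)))$ is continuous into $\gph T$ by the hypothesis that $T$ is continuous relative to segments, so for $s,t$ close to $t_0$ both points $(\sigma(s),T(\sigma(s)))$ and $(\sigma(t),T(\sigma(t)))$ land in the neighborhood $W$ witnessing local monotonicity around $(\sigma(t_0),T(\sigma(t_0)))$. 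Next I would cover $[0,1]$ by finitely many such intervals $I_0,\dots,I_N$ (by compactness), pick a partition $0=\tau_0<\tau_1<\dots<\tau_m=1$ fine enough that each consecutive pair $[\tau_{j},\tau_{j+1}]$ lies inside a single $I_k$, hence monotonicity holds \emph{within} each subinterval, in particular $\la T(\sigma(\tau_{j+1}))-T(\sigma(\tau_j)),\sigma(\tau_{j+1})-\sigma(\tau_j)\ra\ge 0$ for every $j$.

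The final — and most delicate — step is to \emph{chain} these adjacent inequalities into the global one $\la T(\sigma(1))-T(\sigma(0)),\sigma(1)-\sigma(0)\ra\ge 0$. This is where the collinearity of the points is essential: writing $v_j:=T(\sigma(\tau_j))$ and noting $\sigma(\tau_{j+1})-\sigma(\tau_j)=(\tau_{j+1}-\tau_j)(y-x)$, the $j$-th local inequality reads $(\tau_{j+1}-\tau_j)\la v_{j+1}-v_j,\,y-x\ra\ge 0$, i.e. $\la v_{j+1}-v_j,\,y-x\ra\ge 0$ since $\tau_{j+1}>\tau_j$. Summing over $j=0,\dots,m-1$ telescopes to $\la v_m-v_0,\,y-x\ra\ge 0$, which is precisely $\la T(y)-T(x),y-x\ra\ge 0$. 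The main obstacle is therefore not the chaining itself (once one realizes that along a \emph{straight} segment the pairwise-monotonicity inequalities all test against the \emph{same} fixed direction $y-x$ and hence add up), but rather making the covering/partition argument clean: one must be careful that the neighborhoods $W$ in the definition of local monotonicity are neighborhoods in $X\times X^*$, so it is the joint continuity of $t\mapsto(\sigma(t),T(\sigma(t)))$ that is used, and that the finite subcover exists because $[0,1]$ is compact — a point worth stating explicitly since $X$ itself is only assumed locally convex, not metrizable. An alternative, perhaps cleaner, route that avoids partitions is a connectedness argument: let $A:=\{t\in[0,1]:\la v-v_0,\,\sigma(t)-\sigma(0)\ra\ge 0\text{ and }\la v- T(\sigma(s)),\sigma(t)-\sigma(s)\ra\ge 0\ \forall s\in[0,t],\ \forall v\in T\text{-values on }[0,t]\}$ — but spelling this out is messier than the finite-partition telescoping, so I would present the partition proof as primary.
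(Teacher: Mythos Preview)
Your proposal is correct. Both your argument and the paper's two proofs share the same essential reduction: parametrize the segment between $x$ and $y$, observe that along this segment every monotonicity inequality $\la T(\sigma(s))-T(\sigma(t)),\sigma(s)-\sigma(t)\ra\ge 0$ is equivalent to $(s-t)\la T(\sigma(s))-T(\sigma(t)),y-x\ra\ge 0$ against the \emph{fixed} direction $y-x$, and then pass from the local statement to the global one on $[0,1]$. Where you differ is in this last passage. The paper's first proof uses a connectedness argument (an equivalence relation on $[p,q]$ whose classes are shown to be open, forcing a single class), and its alternate proof uses an infimum argument (the set $\Lambda=\{\lambda\in[0,1]:\la T(x_\lambda)-T(y),x-y\ra\ge 0\}$ is nonempty and closed, and $\min\Lambda>0$ leads to a contradiction via local monotonicity). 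Your compactness/finite-partition/telescoping route is a third, equally standard, local-to-global technique on $[0,1]$; it is arguably the most direct of the three and, incidentally, makes no use of the local convexity of $X$ (the paper's first proof invokes convex neighborhoods to show openness of the equivalence classes, though its second proof does not really need this either).
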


We present two completely independent proofs of Theorem~\ref{theo:single} that are of their own interest. \\\\
{\bf Proof.} Fix $p,q\in \dom T$ with $p\neq q$ and get $[p,q]\subset \dom T$, where $[p,q]:=\{\lambda p + (1-\lambda)q\mid 0\le \lambda \le 1\}$ is the segment connecting the points $p$ and $q$. On the segment $[p,q]$, we consider the binary relation ``$\sim$" defined by $x\sim y$ if and only if
\begin{equation*}
 \la T(x_1)-T(x_2), x_1-x_2\ra \ge 0\;\mbox{ whenever }\;x_1,x_2\in[x,y].
\end{equation*}
This relation is clearly reflexive and symmetric. To check its transitivity, take distinct vectors $x,y,z$ such that $x\sim y$ and $y\sim z$. We may suppose without loss of generality that 
$$
y\in ]x,z[:=\big\{\lambda x + (1-\lambda)z\;\big|\;\lambda \in(0,1)\big\}.
$$
Indeed, taking $x\in ]y,z[$ gives us $[x,z]\subset [y,z]$, and hence $x\sim z$. On the other hand, it follows from $z\in ]x,y[$ that $[x,z]\subset[x,y]$, and so $x\sim z$. Picking now $x_1,x_2\in [x,z]$, it suffices to proceed with the case where $x_1\in [x,y[$ and $x_2\in ]y,z]$. Then 
\begin{equation*}
\la T(x_1)-T(x_2),x_1-x_2\ra =\dfrac{\|x_1-x_2\|}{\|x_1-y\|} \la T(x_1)-T(y),x_1-y\ra +\dfrac{\|x_1-x_2\|}{\|y-x_2\|} \la T(y)-T(x_2),y-x_2\ra \ge 0, 
\end{equation*}
which means that $x\sim z$, and ``$\sim$" is an equivalence relation on $[p,q]$. Furthermore, we claim that the relation ``$\sim$" splits $[p,q]$ into open equivalence classes. To verify this, fix $x\in [p,q]$ and let $y\in[x]$, where the latter notation stands for the equivalence class of $x$. It follows from the assumed local monotonicity of the operator $T$ and the local convexity of the space $X$ that there exists a neighborhood $U\times W \subset X\times X^*$ of $(y,T(y))$ for which $U$ is convex and
\begin{equation}\label{theo2-monotone}
T \text{ is monotone relative to }U\times W.
\end{equation}
The continuity of $T$ relative to $[p,q]$ allows us to shrink the convex set $U$, if needed, so that 
\begin{equation}\label{theo2-useinner}
T(U\cap [p,q])\subset W.
\end{equation}
We claim that $U\cap [p,q]\subset [x]$, which would justify the openness of the class $[x]$ in $[p,q]$. To furnish this, we take any $z\in U\cap [p,q]$ and show that $z\sim y$, ensuring therefore that $z\in [x]$ by $y\sim x$. To this end, pick arbitrary vectors $x_1,x_2\in [z,y]$ and observe that having $z,y\in U\cap [p,q]$ yields $x_1,x_2\in U\cap [p,q]$ by the convexity of the neighborhood $U$ of $y$. Employing now \eqref{theo2-monotone} and \eqref{theo2-useinner} tells us that $\la T(x_1)-T(x_2),x_1-x_2\ra \ge 0$, i.e., $z\sim y$. Taking finally into account that $y\in [x]$, we get $z\in [x]$, which  verifies that $[x]$ is open in $[p,q]$. 

If there are more than one equivalence class, then $[p,q]$ is the union of two open disjoint subsets $[p]$ and $\bigcup_{x\nsim p}[x]$. This contradicts the connectedness of $[p,q]$ and shows that there exists only one equivalence class $[p]$. Therefore, $p\sim q$, which completes the first proof of the theorem.\\
\\
{\bf Alternate proof.} Fix $x, y \in \dom T$, and for any $\lambda \in[0,1]$ denote $x_\lambda:=(1-\lambda) x+\lambda y$. Since $\operatorname{dom} T$ is convex, we have $x_\lambda \in \operatorname{dom} T$. Therefore, the set
$$
\Lambda:=\big\{\lambda \in[0,1]\;\big|\;\langle T\left(x_\lambda)-T(y),x-y\right\rangle \ge 0\big\}
$$
is well-defined. Moreover, it has the following properties.
\begin{itemize}

\item Since $\la T(y)-T(y),x-y\ra \ge 0$ and $y=x_{1}$, we have that $1\in \Lambda$, and thus $\Lambda$ is nonempty.

\item The set $\Lambda$ is closed. Indeed, take a sequence $\{\lambda_k\}\subset \Lambda$ such that $\lambda_k \rightarrow \lambda$ as $k\to\infty$. It is obvious that $\lambda \in [0,1]$. Having further
\begin{equation*}
\big\la T\big((1-\lambda_k)x+\lambda_k y\big) -T(y),x-y\big\ra \ge 0\;\mbox{ for all }\;k\in \N
\end{equation*}
and $(1-\lambda_k)x+\lambda_k y \xrightarrow{[x,y]} (1-\lambda)x+\lambda y$ as $k\rightarrow \infty$, we deduce from the continuity of $T$ restricted to the segment $[x,y]$ that
\begin{equation*}
\big\la T\big( (1-\lambda)x +\lambda y\big)-T(y),x-y\big\ra\ge 0,
\end{equation*}
and therefore $\lambda\in \Lambda$, which shows that $\Lambda$ is closed.
\end{itemize}

Define now $\bar{\lambda}:=\min \Lambda$ and get $\bar{\lambda} \in \Lambda$ due to the closedness of $\Lambda$. Suppose that $\bar{\lambda}>0$, i.e., $x_{\bar{\lambda}}\neq x$. Since $\bar{\lambda}\in \Lambda$, we obtain that
\begin{equation*}
\big\langle T(x_{\bar{\lambda}})-T(y),x-y\big\rangle \ge 0.
\end{equation*}
It follows from the local monotonicity of $T$ around $\big(x_{\bar{\lambda}},T(x_{\bar{\lambda}})\big)$ that there exists a neighborhood $U\times W \subset X\times X^*$ of $\big(x_{\bar{\lambda}},T(x_{\bar{\lambda}})\big)$ for which 
\begin{equation}\label{theo2-monotone2}
T \text{ is monotone relative to }U\times W.
\end{equation}
By the continuity of $T$ relative to $[x,y]$, we can always select $U$ so that 
\begin{equation}\label{theo2-useinner2}
T(U\cap [x,y])\subset W.
\end{equation}
Representing $x_{\bar{\lambda}-\varepsilon}-x_{\bar{\lambda}}=\varepsilon(x-y)$ gives us the net convergence $x_{\bar{\lambda}-\varepsilon}\rightarrow x_{\bar{\lambda}}$ in $X$ as $\varepsilon \rightarrow 0$. Since $U$ is a neighborhood of $x_{\bar{\lambda}}$, for small $\varepsilon \in ] 0, \bar{\lambda}[$ we have that $x_{\bar{\lambda}-\varepsilon} \in U\cap [x,y]$ and $T\big(x_{\bar{\lambda}-\varepsilon}\big)\in W$ due to \eqref{theo2-useinner2}. Therefore, it follows from \eqref{theo2-monotone2} that
$$
\left\langle T\left(x_{\bar{\lambda}-\varepsilon}\right)-T\left(x_{\bar{\lambda}}\right), x_{\bar{\lambda}-\varepsilon}-x_{\bar{\lambda}}\right\rangle \geq 0,
$$
which can be equivalently rewritten as
$$
\left\langle T\left(x_{\bar{\lambda}-\varepsilon}\right)-T\left(x_{\bar{\lambda}}\right), x-y\right\rangle \geq 0
$$
while implying in turn that $\left\langle T\left(x_{\bar{\lambda}-\varepsilon}\right)-T(y), x-y\right\rangle \geq 0$. The latter means that $\bar{\lambda}-\varepsilon \in \Lambda$, which contradicts the definition of $\bar{\lambda}$. This allows us to conclude that $\bar{\lambda}=0$, i.e., $0 \in \Lambda$. Remembering that $x_0=x$ tells us that $\langle T(x)-T(y), x-y\rangle \geq 0$, which verifies the global monotonicity of $T$ on $X$ as claimed in the theorem. $\h$

\begin{Remark}\label{rem:single}\rm 
When $\dom T\subset X$ is convex (or more generally, path-connected as defined in 
Section~\ref{sec:set}), the continuity of $T$ relative to its domain immediately yields the path-connectedness (arc-connectedness if $X$ is Hausdorff) of $\gph T$. The reverse implication, although being true when $X=\R$ (see \cite[Example~4, Section~22]{ross}), is false in multidimensional spaces by the counterexample of $T:\R^2 \rightarrow \R^2$ given by
\begin{equation*}
T(x,y):= \begin{cases}
(0,0) &\text{if }\;x<0, \\
(x,0) &\text{if }\;x\ge 0,\;y\ge 0,\\
(-x,0) &\text{if }\;x\ge 0,y<0.
\end{cases}
\end{equation*}
\end{Remark}\vspace*{0.1in}

The following example of two parts shows that both assumptions in Theorem~\ref{theo:single} are {\em essential} for the fulfillment of the equivalence result.

\begin{Example}\rm 
\quad 
\begin{enumerate}

\item The convexity assumption on the domain is not superfluous in Theorem~\ref{theo:single}. Indeed, define the mapping $T:\R \setminus \{0\}\rightrightarrows \R$ by $T(x):=\{0\}$ when $x<0$, and $T(x):=\{-1\}$ when $x>0$. It is clearly continuous relative to its domain and is locally monotone around any graph point. Nevertheless, the global monotonicity of $T$ obviously fails.
 
\item The imposed continuity of the operator in Theorem~\ref{theo:single} is essential. For instance, the function $T\colon\R\to\R$ defined by $T(x):=0$ when $x\le 0$ and $T(x):=-1$ when $x>0$ clearly serves as a counterexample.
\end{enumerate}
\end{Example}

\section{Monotonicity of Univariate Set-Valued Operators}\label{sec:set}

The main goal of this section is to investigate relationships between global and local monotonicity of univariate set-valued operators $T\colon\R\tto\R$. Recall first that a subset $S$ of a topological space $X$ is {\em path-connected} (resp.\ {\em arc-connected}) if any two points $x\neq y$ of $S$ can be joined by a path (resp.\ an arc) within $S$, i.e., there exists $\varphi:[0,1]\rightarrow S$ such that $\varphi (0)=x$, $\varphi (1)=y$, and $\varphi:[0,1]\rightarrow \varphi([0,1])$ is continuous (resp.\ homeomorphic). Note that when $X$ is Hausdorff (in particular, $X=\R^n$), the notions of path-connectedness and arc-connectedness coincide.\vspace*{0.05in} 

In the next three propositions, we discuss some monotonicity and extremality properties  of real-valued components of {\em paths} in $\R^2$, i.e., continuous mappings from $[0,1]$ to $\R^2$.

\begin{Proposition}\label{prop}
Let $\varphi=(\varphi_1,\varphi_2):[0,1]\rightarrow \R^2$ be an injective continuous mapping. Assume that for all $\overline{t}\in [0,1]$, we have the inclusion
\begin{equation}\label{y-mono}
\varphi (s) \in \big(\varphi(t)-\R^2_+\big) \cup \big( \varphi(t)+\R^2_+\big)\;\text{ whenever }\; s,t \in [0,1]\;\mbox{ are near }\;\overline{t}.
\end{equation}
Then the following assertions are satisfied:

{\bf(i)} If $\varphi_1 (0)<\varphi_1 (1)$, then $\varphi_1(0)=\min\varphi_1 ([0,1])$, $\varphi_1(1)=\max\varphi_1 ([0,1])$, and  
\begin{equation}\label{contra-min}
\varphi (t) \in \varphi(0)+\R^2_+ \text{ for all } t\in ]0,1] \text{ near } 0, \quad \varphi (t) \in \varphi(1)-\R^2_+ \text{ for all } t\in [0,1[ \text{ near } 1.
\end{equation}

{\bf(ii)} If $\varphi_1 (0)=\varphi_1 (1)$, then $\varphi_1$ is constant on $[0,1]$.
\end{Proposition}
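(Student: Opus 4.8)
The plan is to prove the stronger statement that, under \eqref{y-mono}, the \emph{whole} mapping $\varphi$ is monotone with respect to the componentwise order on $[0,1]$, i.e.\ either $\varphi$ is non-decreasing or $\varphi$ is non-increasing as a map into $(\R^2,\le)$; both assertions (i) and (ii) then follow at once. The first observation is that \eqref{y-mono} says precisely that near every $\overline t\in[0,1]$ the points $\varphi(s)$ form a \emph{chain} for the coordinatewise order, since $\varphi(s)\in(\varphi(t)-\R^2_+)\cup(\varphi(t)+\R^2_+)$ means exactly that $\varphi(s)$ and $\varphi(t)$ are comparable. Using compactness of $[0,1]$ and a Lebesgue-number argument applied to the open cover of $[0,1]$ furnished by \eqref{y-mono}, I would fix a partition $0=t_0<t_1<\dots<t_N=1$ such that $\varphi(I_k)$ is a chain on each subinterval $I_k:=[t_{k-1},t_k]$.

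The key elementary lemma is that on any chain $C\subset\R^2$ the functional $\ell(a):=a_1+a_2$ is injective on $C$ and \emph{order-reflecting}: for $a,b\in C$ one has $a\le b$ if and only if $\ell(a)\le\ell(b)$ (indeed, if $b\le a$ and $\ell(a)\le\ell(b)$, then both coordinate differences are nonnegative with nonpositive sum, hence $a=b$). Applying this with $C=\varphi(I_k)$, the function $g:=\varphi_1+\varphi_2$ restricted to $I_k$ is continuous and injective (injectivity of $\varphi$ together with injectivity of $\ell$ on $\varphi(I_k)$), hence \emph{strictly monotone} on $I_k$ by the standard fact that a continuous injection on an interval is strictly monotone; and, via the order-reflecting property, $\varphi$ is non-decreasing on $I_k$ when $g|_{I_k}$ is increasing and non-increasing on $I_k$ when $g|_{I_k}$ is decreasing.

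Next I would glue the pieces. At each interior node $t_k$, condition \eqref{y-mono} at $\overline t=t_k$ supplies an interval $J$ containing $t_k$ in its interior on which $\varphi(J)$ is again a chain, so $g|_J$ is strictly monotone; comparing its monotonicity direction on the nondegenerate overlaps $J\cap I_k$ and $J\cap I_{k+1}$ with the directions of $g|_{I_k}$ and $g|_{I_{k+1}}$ forces the latter two to coincide. Iterating over $k=1,\dots,N-1$ shows that all $g|_{I_k}$ share a common direction, so $g$ is strictly monotone on the whole of $[0,1]$; chaining $\varphi(s)\le\varphi(t_k)\le\dots\le\varphi(t)$ through the nodes (transitivity of $\le$) then gives that $\varphi$ itself is non-decreasing on $[0,1]$ in the case $g$ is increasing, and non-increasing in the case $g$ is decreasing. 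In either case $\varphi_1$ is monotone on $[0,1]$. Finally, for (i): if $\varphi_1(0)<\varphi_1(1)$ the non-increasing alternative is excluded, so $\varphi$ is non-decreasing, whence $\varphi_1(0)=\min\varphi_1([0,1])$, $\varphi_1(1)=\max\varphi_1([0,1])$, and $\varphi(0)\le\varphi(t)\le\varphi(1)$ for all $t\in[0,1]$, which in particular yields \eqref{contra-min}; for (ii): a monotone $\varphi_1$ with $\varphi_1(0)=\varphi_1(1)$ is constant.

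The step I expect to be the main obstacle is the gluing: one must exploit that the comparability in \eqref{y-mono} holds on a genuine \emph{two-sided} neighborhood of each node, so that $\varphi$ is actually a chain across the node rather than merely a chain on each side, which is exactly what makes the monotonicity direction of $g$ propagate from one subinterval to the next. Once this is set up, the chain lemma and the ``continuous injection on an interval is strictly monotone'' fact are routine, and the remaining bookkeeping (Lebesgue number, transitivity of $\le$, constancy from equal endpoints) is elementary.
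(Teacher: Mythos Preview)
Your proof is correct and takes a genuinely different route from the paper's.

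The paper argues (i) by contradiction: it locates the point $(z_1,z_1^*)\in\varphi([0,1])$ where $\varphi_1$ is minimal (and among those, $\varphi_2$ is minimal), sets $t_1:=\varphi^{-1}(z_1,z_1^*)$, and shows that $t_1\in\,]0,1[$ is impossible via a connectedness argument---the local chain condition forces the path to pass through $\varphi(t_1-\varepsilon)$ on its way from $\varphi(t_1)$ to $\varphi(t_1+\varepsilon)$, contradicting injectivity. Part (ii) is reduced to (i) by another contradiction. Only afterwards, in a separate Proposition, does the paper deduce that $\varphi_1$ and $\varphi_2$ are monotone on $[0,1]$, by applying (i) on subintervals.

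Your approach is more structural: the order-reflecting functional $\ell(a)=a_1+a_2$ turns the local chain condition into local injectivity of the scalar function $g=\ell\circ\varphi$, and the classical ``continuous injection on an interval is strictly monotone'' fact plus a Lebesgue-number gluing gives global strict monotonicity of $g$, hence global coordinatewise monotonicity of $\varphi$. This yields both (i) and (ii) immediately, and in fact proves the stronger conclusion $\varphi(0)\le\varphi(t)\le\varphi(1)$ for \emph{all} $t$ (not just $t$ near the endpoints), thereby subsuming the paper's subsequent proposition on monotonicity of the components in one stroke. The paper's argument avoids the $\ell$-trick but pays for it with a more delicate topological step; your argument is cleaner and delivers more, at the modest cost of the chain/order-reflecting lemma.
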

\begin{proof}
To verify (i), define the numbers 
\begin{equation}\label{t1}
z_1:=\min\varphi_1 ([0,1]),\; z_1^*:= \min\{z^*\in \R\mid (z_1,z^*)\in \varphi([0,1])\},\;\text{ and }\; t_1:=\varphi^{-1}(z_1,z_1^*).
\end{equation}
If $t_1=1$, then $\varphi (t_1)=\varphi (1)$, i.e., $(z_1,z_1^*)=\big(\varphi_1 (1),\varphi_2 (1)\big)$. On the other hand, it follows from $\varphi_1 (0)<\varphi_1 (1)$ that $z_1=\min\varphi_1 ([0,1])<\varphi_1 (1)$, which contradicts the previous assertion. Thus $t_1\in [0,1[$. Now we are going to show that $t_1=0$. Suppose on the contrary that $t_1>0$, and select by \eqref{y-mono} such $0<\e<\min\{t_1,1-t_1\}$ that $[t_1-\e,t_1+\e]\subset ]0,1[$ and that 
\begin{equation}\label{y-mono*}
\varphi (s)\in \big( \varphi(t)-\R^2_+\big)\cup \big(\varphi (t)+\R^2_+\big)\;\text{ whenever }\;s,t\in [t_1-\e,t_1+\e].
\end{equation}
In particular, \eqref{y-mono*} implies that $\varphi (t_1-\e)\in \big( \varphi(t_1+\e)-\R^2_+\big)\cup \big( \varphi (t_1+\e)+\R^2_+\big)$. In what follows, we consider the case where
\begin{equation}\label{-and+}
\varphi (t_1-\e)\in \varphi(t_1+\e)-\R^2_+,\;\text{ i.e., }\;\varphi (t_1+\e)\in \varphi (t_1-\e)+\R^2_+,
\end{equation}
while observing that the case $\varphi (t_1-\e)\in \varphi (t_1+\e)+\R^2_+$ can be treated similarly.

Note that the inclusion in \eqref{y-mono*} ensures that 
\begin{equation}\label{use-w}
\varphi (t_1)\in \big( \varphi(t_1-\e)-\R^2_+\big)\cup \big( \varphi (t_1-\e)+\R^2_+\big),
\end{equation}
and recall that $\varphi_1 (t_1)=\min\varphi_1 ([0,1]) \le \varphi_1 (t_1-\e)$. If $\varphi_1 (t_1)<\varphi_1 (t_1-\e)$, then it follows from \eqref{use-w} that $\varphi_2 (t_1)\le \varphi_2 (t_1-\e)$, and thus $ \varphi (t_1)\in \varphi (t_1-\e)-\R^2_+$. If $z_1=\varphi_1 (t_1)=\varphi_1 (t_1-\e)$, then $\varphi_2 (t_1)\le  \varphi_2 (t_1-\e)$ by the definition of $z_1^*$ in \eqref{t1}. This yields 
\begin{equation}\label{t+}
\varphi (t_1)\in \varphi (t_1-\e)-\R^2_+.
\end{equation}
Observe that the set $\big( \varphi ( t_1-\varepsilon ) -
\R_{+}^{2}\big)\cup{\big( \varphi ( t_1-\varepsilon ) +
\R_{+}^{2}\big)} $ is connected while the set $\big( \varphi ( t_1-\varepsilon ) -\R
_{+}^{2}\big) \cup \big( \varphi ( t_1-\varepsilon) +\R_{+}^{2}\big)\setminus \{ \varphi ( t_1-\varepsilon )\} $ is not. Combining \eqref{y-mono*}, \eqref{-and+}, \eqref{t+} with the continuity of $\varphi$, we find $\widetilde{t}\in \left[ t_1 ,t_1+\e\right] $ such that $\varphi( \widetilde{t}) =\varphi ( t_1-\varepsilon)$. This
contradicts the injectivity of $\varphi $ and show that $t_1=0$, and therefore $\varphi_1 (0)=\varphi_1 (t_1)=\min \varphi_1 ([0,1])$. Then the first assertion in \eqref{contra-min} is a direct consequence of \eqref{y-mono} and the definition of $\varphi(0)=(z_1,z_1^*)$. The other assertions in (i) involving $\varphi (1)$ are deduced directly from those for $\varphi(0)$ by replacing $(\varphi_1,\varphi_2)$ by $(-\varphi_1,-\varphi_2)$.\vspace*{0.05in}

Now we verify (ii). Arguing by contradiction, suppose that there exists 
$t_{0}\in ]0,1[$ such that $\varphi _{1}( t_{0})
\neq \varphi _{1}( 0)$. For definiteness, we assume
that $\varphi _{1}( t_{0}) <\varphi _{1}( 0)$ while observing that the other case where
$\varphi _{1}( t_{0}) >\varphi _{1}( 0)$ can be treated similarly. Using again definitions \eqref{t1} and taking into account that $\varphi _{1}( 
t_1) =z_1\leq \varphi _{1}( t_{0}) <\varphi
_{1}( 0) =\varphi _{1}( 1)$ give us $t_1\in ]0,1[$, and so we proceed as in the proof of (i) arriving in this way to a contradiction with the injectivity of $\varphi$. Therefore, $\varphi _{1}$ is constant on $[0,1]$, which concludes the proof of the proposition.
\end{proof}\vspace*{0.05in}

It turns out that the technical assumption in \eqref{y-mono} is equivalent to the {\em local monotonicity} of the set $\varphi([0,1])$ as in the following proposition.

\begin{Proposition}\label{equiv}
In the setting of Proposition~{\rm\ref{prop}}, condition \eqref{y-mono} holds for a fixed number $\overline{t}\in [0,1]$ if and only if the set $\varphi([0,1])$ is locally monotone around $\varphi(\overline{t})$ in the sense that there exists a neighborhood $W$ of $\varphi (\overline{t})$ such that
\begin{equation*}
(x_1-x_2)(y_1-y_2)\ge 0 \quad \text{for all}\quad (x_1,y_1),(x_2,y_2)\in W \cap \varphi ([0,1]).
\end{equation*}
\end{Proposition}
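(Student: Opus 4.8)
The plan is to recognize that the two conditions express the same ``comparability'' of pairs of points --- one phrased in the parameter interval $[0,1]$ and the other in the image set $\varphi([0,1])$ --- and then to pass between the two formulations using the fact that $\varphi$ is a homeomorphism onto $\varphi([0,1])$. First I would record the elementary remark that for $a=(a_1,a_2)$ and $b=(b_1,b_2)$ in $\R^2$ one has $a\in(b-\R^2_+)\cup(b+\R^2_+)$ if and only if $(a_1-b_1)(a_2-b_2)\ge 0$ (both factors of the same sign, with the boundary cases where one factor vanishes handled directly). Taking $a=\varphi(s)$ and $b=\varphi(t)$, this shows that \eqref{y-mono} at the fixed $\overline t$ says exactly that there is $\delta>0$ with $(\varphi_1(s)-\varphi_1(t))(\varphi_2(s)-\varphi_2(t))\ge 0$ for all $s,t\in[0,1]$ within $\delta$ of $\overline t$, while the local monotonicity of $\varphi([0,1])$ around $\varphi(\overline t)$ says that there is a neighborhood $W$ of $\varphi(\overline t)$ for which any two points of $W\cap\varphi([0,1])$ satisfy the same inequality.

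Next I would invoke the standard fact that a continuous injection from the compact space $[0,1]$ into the Hausdorff space $\R^2$ is a homeomorphism onto its image; hence $\varphi^{-1}\colon\varphi([0,1])\to[0,1]$ is continuous. For the implication ``local monotonicity of $\varphi([0,1])$ $\Rightarrow$ \eqref{y-mono}'', continuity of $\varphi$ at $\overline t$ produces $\delta>0$ with $\varphi(s)\in W$ whenever $|s-\overline t|<\delta$, so for such $s,t$ the points $\varphi(s),\varphi(t)$ lie in $W\cap\varphi([0,1])$ and are therefore comparable, which is precisely \eqref{y-mono}. For the converse, given $\delta>0$ as in \eqref{y-mono}, continuity of $\varphi^{-1}$ at $\varphi(\overline t)$ furnishes an open neighborhood $W$ of $\varphi(\overline t)$ in $\R^2$ with $\varphi^{-1}\big(W\cap\varphi([0,1])\big)\subset(\overline t-\delta,\overline t+\delta)$; then any two points of $W\cap\varphi([0,1])$ are images $\varphi(s),\varphi(t)$ of parameters within $\delta$ of $\overline t$, so \eqref{y-mono} yields their comparability, i.e., the local monotonicity inequality on $W$.

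The only delicate step is the converse implication: a priori a point of $\varphi([0,1])$ lying close to $\varphi(\overline t)$ need not be the image of a parameter close to $\overline t$, so the argument genuinely relies on the homeomorphism property --- equivalently, on the compactness of $[0,1]$ together with the injectivity of $\varphi$ --- to transfer a neighborhood from the interval to the curve. Once that is in hand, the remainder is a routine unwinding of definitions, and no further use of the path structure or of Proposition~\ref{prop} is needed.
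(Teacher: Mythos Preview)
Your argument is correct. Both directions are handled properly, and the key observation that $a\in(b-\R^2_+)\cup(b+\R^2_+)$ is equivalent to $(a_1-b_1)(a_2-b_2)\ge 0$ makes the translation between the two formulations transparent.

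Your route differs from the paper's only in the ``only if'' direction. You invoke upfront the standard fact that a continuous injection from a compact space to a Hausdorff space is a homeomorphism onto its image, and then use continuity of $\varphi^{-1}$ at $\varphi(\overline t)$ to pull a neighborhood in the image back to a $\delta$-interval in $[0,1]$. The paper instead argues by contradiction: if $\varphi([0,1])$ were not locally monotone around $\varphi(\overline t)$, one extracts sequences $s_k,t_k$ with $\varphi(s_k),\varphi(t_k)\to\varphi(\overline t)$ but $\varphi(s_k),\varphi(t_k)$ not comparable; condition~\eqref{y-mono} forces one of these sequences, say $\{s_k\}$, to stay at least $\varepsilon$ away from $\overline t$, and passing to a convergent subsequence $s_k\to s\neq\overline t$ with $\varphi(s)=\varphi(\overline t)$ contradicts injectivity. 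In effect the paper reproves, by a bare-hands sequential compactness argument, the single instance of continuity of $\varphi^{-1}$ that is needed. Your version is cleaner and more conceptual; the paper's is more self-contained but is doing the same work under the hood.
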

\begin{proof}
The ``if" part is a direct consequence of the continuity of the mapping $\varphi$. To verify the converse implication, suppose 
on the contrary that $\varphi ([0,1])$ is not locally
monotone around $\varphi (\overline{t})$. Then for every $k\in\N$,
the set $\varphi ([0,1])\cap B\left( \varphi (\overline{t}),\frac{1}{k}\right)$, where $B\left( \varphi (\overline{t}),\frac{1}{k}\right)$ stands for the open ball centered at $\varphi (\overline{t})$ with radius $\frac{1}{k}$, is
not monotone. The latter means that there exist numbers $s_{k},t_{k}\in \lbrack 0,1]$ such that for all $k\in\N$ we have
\begin{equation}
\varphi (s_{k}),\varphi \left( t_{k}\right) \in B\left( \varphi (\overline{t}
),\frac{1}{k}\right),\label{incl}
\end{equation}
\begin{equation}
\varphi (s_{k})\notin \left( \varphi (t_{k})-\R_{+}^{2}\right) \cup
\left( \varphi (t_{k})+\R_{+}^{2}\right).  \label{nonmon}
\end{equation}%
It follows from \eqref{y-mono} that there exists $\varepsilon >0$ such that $\varphi (s)\in
\left( \varphi (t)-\R_{+}^{2}\right) \cup \left( \varphi (t)+\R_{+}^{2}\right) $ for all $s,t\in \lbrack 0,1]\cap \big]\overline{t}-\varepsilon ,\overline{t}+\varepsilon \big[$. Then relation (\ref{nonmon}) implies that
either $\left\vert s_{k}-\overline{t}\right\vert \geq \varepsilon $, or $
\left\vert t_{k}-\overline{t}\right\vert \geq \varepsilon $, or both conditions hold. Without loss of generality, assume that $\left\vert s_{k}-\overline{t}\right\vert
\geq\varepsilon $ and that the sequence $\left\{ s_{k}\right\} $ is
convergent. Denoting $s:=\lim s_{k}$, we clearly get that $\left\vert s-\overline{t}
\right\vert \geq \varepsilon$, and so $s\neq $ $\overline{t}$. On the other hand, it follows  from (\ref{incl}) and the continuity of $\varphi$ that $\varphi \left(
s\right) =\varphi \left( \overline{t}\right)$. This contradicts the
injectivity of $\varphi$ and thus completes the proof of the proposition.
\end{proof}

The final proposition shows that in the setting of Proposition~\ref{prop}, the functions $\ph_1$ and $\ph_2$ are {\em globally monotone} on $[0,1]$. 

\begin{Proposition}\label{mon comp} Let $\varphi_1,\ph_2$ be taken from Proposition~{\rm\ref{prop}} under the assumptions imposed therein. Then both $\varphi_{1}$ and $\varphi _{2}$ are monotone on $[0,1]$.
\end{Proposition}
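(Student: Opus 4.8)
The plan is to prove that $\varphi_1$ is monotone on $[0,1]$; the argument for $\varphi_2$ follows by symmetry (replace $\varphi=(\varphi_1,\varphi_2)$ by $(\varphi_2,\varphi_1)$, noting that the hypotheses of Proposition~\ref{prop} are symmetric in the two coordinates). Fix any $0\le a<b\le 1$ and restrict attention to the sub-path $\psi:=\varphi|_{[a,b]}$, suitably reparametrized to $[0,1]$. This $\psi$ is again injective, continuous, and satisfies condition \eqref{y-mono} at every point of its parameter interval, since \eqref{y-mono} is a purely local condition on the image. Hence Proposition~\ref{prop} applies to $\psi$.

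Now I would split into the two cases of Proposition~\ref{prop} applied to $\psi$. If $\psi_1(0)=\psi_1(1)$, i.e. $\varphi_1(a)=\varphi_1(b)$, then part (ii) gives that $\psi_1$ is constant on $[0,1]$, so in particular $\varphi_1(a)=\varphi_1(b)$ trivially. If $\varphi_1(a)\ne\varphi_1(b)$, say $\varphi_1(a)<\varphi_1(b)$ (the reverse inequality is handled by running $\psi$ backwards), then part (i) tells us that $\psi_1(0)=\min\psi_1([0,1])$ and $\psi_1(1)=\max\psi_1([0,1])$. I claim this already forces $\varphi_1(a)\le\varphi_1(c)\le\varphi_1(b)$ for every $c\in[a,b]$: indeed $\varphi_1(c)=\psi_1(t)$ for the corresponding $t$, and $\min\psi_1([0,1])\le\psi_1(t)\le\max\psi_1([0,1])$ by definition of min and max. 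So on the sub-path $[a,b]$ the endpoint values of $\varphi_1$ are extremal.

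To upgrade "endpoints are extremal on every subinterval" to genuine monotonicity, I would argue as follows. Define $a$ and $b$ with $\varphi_1(a)<\varphi_1(b)$ as above and take any $a\le c<d\le b$. Applying the extremality conclusion to the sub-path on $[c,d]$: either $\varphi_1(c)=\varphi_1(d)$, or one of them is the min and the other the max of $\varphi_1([c,d])$; if $\varphi_1(d)<\varphi_1(c)$ then $\varphi_1(d)=\min$, $\varphi_1(c)=\max$ on $[c,d]$. But apply extremality instead to the full interval $[a,b]$ and the intermediate points: $\varphi_1(a)\le\varphi_1(c)$, $\varphi_1(a)\le\varphi_1(d)$, $\varphi_1(c)\le\varphi_1(b)$, $\varphi_1(d)\le\varphi_1(b)$. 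I need to rule out $\varphi_1(d)<\varphi_1(c)$. Suppose it holds. Then on $[a,d]$, since $\varphi_1(a)<\varphi_1(b)$ need not directly compare $\varphi_1(a)$ and $\varphi_1(d)$ — but we do know $\varphi_1(a)\le\varphi_1(d)$ from extremality on $[a,b]$, so either $\varphi_1(a)<\varphi_1(d)$, whence part~(i) on $[a,d]$ gives $\varphi_1(d)=\max\varphi_1([a,d])\ge\varphi_1(c)$, contradiction; or $\varphi_1(a)=\varphi_1(d)$, whence part~(ii) on $[a,d]$ gives $\varphi_1$ constant on $[a,d]$, so $\varphi_1(c)=\varphi_1(a)=\varphi_1(d)$, again contradicting $\varphi_1(d)<\varphi_1(c)$. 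Hence $\varphi_1(c)\le\varphi_1(d)$, so $\varphi_1$ is nondecreasing on $[a,b]$. Finally, taking $a=0$, $b=1$: if $\varphi_1(0)<\varphi_1(1)$ then $\varphi_1$ is nondecreasing on $[0,1]$; if $\varphi_1(0)>\varphi_1(1)$ it is nonincreasing by the symmetric argument; and if $\varphi_1(0)=\varphi_1(1)$ it is constant by part~(ii). In all cases $\varphi_1$ is monotone, and likewise $\varphi_2$.

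The main obstacle is the bookkeeping in the last step: the extremality conclusion of Proposition~\ref{prop}(i) is one-sided (it gives $\varphi_1$ of the endpoints as global extrema of the sub-path, not a comparison between two interior points directly), so one must carefully feed the right sub-intervals into parts (i) and (ii) to chain the inequalities without circularity. I expect the clean way to present it is exactly the two-nested-subinterval argument above, using $[a,b]$ to get $\varphi_1(a)\le\varphi_1(c),\varphi_1(d)\le\varphi_1(b)$ and then $[a,d]$ (or $[c,b]$) to compare $\varphi_1(c)$ with $\varphi_1(d)$.
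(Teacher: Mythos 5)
Your proposal is correct and takes essentially the same approach as the paper: reduce by symmetry to $\varphi_1$, handle the constant case via Proposition~\ref{prop}(ii), and in the nonconstant case derive a contradiction by applying Proposition~\ref{prop} to a reparametrized sub-path so that the maximum of $\varphi_1$ on that subinterval must occur at an endpoint. The paper expresses your two-case split on $[0,d]$ more compactly via the reparametrization $\eta(t):=\varphi(t_2 t)$ and the observation $\max\eta_1([0,1])=\max\{\eta_1(0),\eta_1(1)\}$, but the underlying mechanism is identical.
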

\begin{proof}
We only verify the monotonicity of $\varphi _{1}$ while observing that the proof for $
\varphi _{2}$ is similar by replacing $\varphi $ with 
$( \varphi _{2},\varphi _{1})$. In the case where $\varphi _{1}(0)
=\varphi _{1}( 1)$, it follows from Proposition~\ref{prop}(ii) that the function $\varphi
_{1}$ is constant on $[0,1]$ and hence it is monotone. Consider now the case where $
\varphi _{1}( 0)\ne\varphi _{1}( 1)$ and assume for definiteness that
$\varphi _{1}( 0) <\varphi _{1}( 1)$ and show that $\varphi _{1}$ is nondecreasing. Suppose the contrary and then find $t_{1},t_{2}\in \left[ 0,1\right] $ such that $0\le t_{1}<t_{2}$ and  that $\varphi
_{1}( t_{1}) >\varphi _{1}( t_{2})$. Since $\varphi_1(0)<\varphi_1 (1)$, Proposition~\ref{prop} implies that $\varphi_1(0)=\min\varphi_1 ([0,1])$, and thus we get $\varphi_1 (0)\le \varphi_1 (t_2)<\varphi_1 (t_1)$. Employing again
Proposition~\ref{prop} with $\varphi$ replaced by $\eta: =( \eta _{1},\eta _{2})
:\left[ 0,1\right] \rightarrow \R^2$ with $\eta ( t)
:=\varphi ( t_{2}t)$, we deduce that
\begin{equation*}
\max \eta _{1}( \left[ 0,1\right] )  =\max\{\eta_1 (0),\eta_1(1)\}=\max\{\varphi_1(0),\varphi_1 (t_2)\} <\varphi
_{1}( t_{1})  =\eta _{1}\left( \frac{t_{1}}{t_{2}}\right) ,
\end{equation*}%
which is nonsense. Thus $\varphi _{1}$ is nondecreasing as claimed. If $\varphi _{1}( 0) >\varphi_{1}( 1)$, then replacing $\varphi $ with the mapping $\psi :\left[ 0,1\right] \rightarrow
\R^2$ defined by $\psi ( t) :=\varphi ( 1-t)$ tells us that
$\varphi _{1}$ is nonincreasing on $[0,1]$, which completes the proof of the proposition.
\end{proof}

Using the above propositions, we are now ready to obtain the main result of this section.

\begin{Theorem}\label{main}
Consider a set-valued mapping $T:\R \rightrightarrows \R$ whose graph is
path-connected. If $T$ is locally monotone around every point of
its graph, then $T$ is globally monotone on $\R$.
\end{Theorem}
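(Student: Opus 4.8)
The plan is to reduce the global monotonicity of $T$ to a property of arcs lying inside $\gph T$, and then feed such an arc into Propositions~\ref{equiv} and \ref{mon comp}. Fix two points $(x_1,y_1),(x_2,y_2)\in\gph T$; we must show $(x_1-x_2)(y_1-y_2)\ge 0$, and there is nothing to prove when the two points coincide. Since $\gph T\subset\R^2$ is path-connected and $\R^2$ is Hausdorff, path- and arc-connectedness coincide (as recalled in Section~\ref{sec:set}), so there is an injective continuous mapping $\varphi=(\varphi_1,\varphi_2)\colon[0,1]\to\gph T$ with $\varphi(0)=(x_1,y_1)$ and $\varphi(1)=(x_2,y_2)$. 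First I would check that $\varphi$ satisfies hypothesis~\eqref{y-mono} at every $\overline{t}\in[0,1]$: by the definition of local monotonicity recalled in Section~\ref{sec:monotone}, the local monotonicity of $T$ around $\varphi(\overline{t})$ says exactly that $\gph T$ is a locally monotone subset of $\R^2$ around $\varphi(\overline{t})$, hence so is its subset $\varphi([0,1])$ (the same neighborhood works, since a subset of a monotone set is monotone), and Proposition~\ref{equiv} then gives \eqref{y-mono} at $\overline{t}$. With \eqref{y-mono} in force, Proposition~\ref{mon comp} applies, so $\varphi_1$ and $\varphi_2$ are each monotone on $[0,1]$.

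Next I would dispose of the easy cases and set up a contradiction for the remaining one. If $\varphi_1$ is constant then $x_1=x_2$ and we are done; so assume $\varphi_1$ is nonconstant and, after replacing $\varphi$ by $t\mapsto\varphi(1-t)$ if necessary, assume $\varphi_1(0)<\varphi_1(1)$, so that $\varphi_1$ is nondecreasing. If $\varphi_2(0)\le\varphi_2(1)$, then $(x_1-x_2)(y_1-y_2)=\big(\varphi_1(0)-\varphi_1(1)\big)\big(\varphi_2(0)-\varphi_2(1)\big)\ge 0$ and we are again done. Thus it remains to derive a contradiction from $\varphi_2(0)>\varphi_2(1)$; note that then $\varphi_2$ is nonincreasing and nonconstant.

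The heart of the argument is a local dichotomy: for every $\overline{t}\in[0,1]$ there is a relative neighborhood $J$ of $\overline{t}$ in $[0,1]$ on which $\varphi_1$ is constant or on which $\varphi_2$ is constant. To obtain it, pick $\e>0$ so that $\varphi(s)\in\big(\varphi(r)-\R^2_+\big)\cup\big(\varphi(r)+\R^2_+\big)$ for all $s,r\in J:=[0,1]\cap\,]\overline{t}-\e,\overline{t}+\e[$; for $r<s$ in $J$, combining this inclusion with the fact that $\varphi_1$ is nondecreasing and $\varphi_2$ is nonincreasing forces $\varphi_1(s)=\varphi_1(r)$ or $\varphi_2(s)=\varphi_2(r)$. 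If $\varphi_1$ is not constant on $J$, choose $a<b$ in $J$ with $\varphi_1(a)<\varphi_1(b)$; then for each $r\in J$, distinguishing the position of $\varphi_1(r)$ relative to $\varphi_1(a)$ and $\varphi_1(b)$ shows $\varphi_2(r)=\varphi_2(b)$, so $\varphi_2$ is constant on $J$. Now put $G_i:=\{t\in[0,1]\mid\varphi_i\text{ is constant on some relative neighborhood of }t\}$ for $i=1,2$; these sets are open in $[0,1]$, and the dichotomy yields $G_1\cup G_2=[0,1]$. Since a locally constant function on the connected set $[0,1]$ is constant while $\varphi_1,\varphi_2$ are nonconstant, neither $G_1$ nor $G_2$ is all of $[0,1]$; as their union is the connected set $[0,1]$, both are nonempty and they must intersect. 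Any point of $G_1\cap G_2$ has a relative neighborhood on which both $\varphi_1$ and $\varphi_2$, hence $\varphi$, are constant, contradicting the injectivity of $\varphi$. This contradiction rules out $\varphi_2(0)>\varphi_2(1)$ and finishes the proof.

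I expect the local dichotomy together with the ensuing connectedness argument to be the main obstacle: Proposition~\ref{mon comp} only yields monotonicity of $\varphi_1$ and $\varphi_2$ individually, and the genuine difficulty is to exclude the ``mixed'' configuration in which $\varphi_1$ increases while $\varphi_2$ decreases, which is exactly where injectivity of $\varphi$ and connectedness of $[0,1]$ must be brought in. Two minor points also need care: the reduction to an injective arc relies on the coincidence of path- and arc-connectedness in $\R^2$, and the implication ``$\gph T$ locally monotone around $p$ $\Rightarrow$ $\varphi([0,1])$ locally monotone around $p$'' is used, which is immediate but should be stated.
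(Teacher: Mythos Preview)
Your proof is correct, but it takes a genuinely different route from the paper's. After the common setup (choosing an injective arc $\varphi$ in $\gph T$ and invoking Proposition~\ref{equiv} to obtain \eqref{y-mono}), the paper does \emph{not} use Proposition~\ref{mon comp} at all. Instead, assuming $\varphi_1(0)<\varphi_1(1)$ and, for contradiction, $\varphi_2(0)>\varphi_2(1)$, it applies Proposition~\ref{prop}(i) twice---once to $\varphi$ and once to $\psi:=(-\varphi_2,-\varphi_1)$---to conclude that for $t<1$ near $1$ the point $\varphi(t)$ lies neither in $\varphi(1)+\R^2_+$ nor in $\varphi(1)-\R^2_+$, which immediately contradicts local monotonicity of $T$ at $\varphi(1)$. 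That is the whole argument after the setup.

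Your route instead pulls in Proposition~\ref{mon comp} to make both $\varphi_1,\varphi_2$ monotone, and then runs an additional ``local dichotomy plus connectedness'' argument to reach the contradiction via injectivity of $\varphi$. This is sound (the key step---that on each small interval one of the two components must be constant, and that if $\varphi_1$ is nonconstant there then $\varphi_2$ is constant there---follows exactly as you indicate from the pairwise alternative combined with monotonicity), and the open-cover/connectedness finish is clean. The trade-off: the paper's proof is markedly shorter and shows that Proposition~\ref{mon comp} is not needed for Theorem~\ref{main}, while your argument extracts finer structural information about arcs in a locally monotone graph (the local step-function dichotomy), which might be of independent interest. Two small comments: your passage ``distinguishing the position of $\varphi_1(r)$'' is terse and would benefit from the two-line case split (pair $r$ with $a$ and with $b$ and use that $\varphi_2(a)=\varphi_2(b)$); and note that you are implicitly using injectivity only at the very end, whereas the paper uses it (through Proposition~\ref{prop}) right at the contradiction point.
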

\begin{proof}
Given pairs $( x,x^{\ast }),( y,y^{\ast }) \in \gph T$ with $x<y$, consider a continuous function
$\varphi =( \varphi _{1},\varphi _{2})\colon[0,1]\to \gph T$
with $\varphi ( 0) =( x,x^{\ast})$ and $\varphi ( 1) =( y,y^{\ast })$. Such a function $\varphi$ can be selected injective, since pathwise connectedness and arcwise
connectedness are equivalent for subsets of Euclidean spaces; see, e.g., \cite[Corollary~31.6]{eng}. We intend to show that $x^{\ast }\leq y^{\ast }$ via arguing by contradiction, i.e., assuming that $x^{\ast }>y^{\ast }$.

Recall from Proposition~\ref{equiv} that the local monotonicity of $T$ around every point of its graph yields the fulfillment of \eqref{y-mono} for any $\overline{t}\in [0,1]$. Applying Proposition~\ref{prop}(i) to $\varphi=(\varphi_1,\varphi_2):[0,1]\rightarrow \gph T$ with $\varphi_1(0)=x<y=\varphi_1 (1)$ gives us 
\begin{equation}\label{contra1}
\varphi(t) \notin \varphi (1) +\R^2_+\;\text{ for all }\;t\in [0,1[\;\text{ near }\;\bar t=1.
\end{equation}
On the other hand, we have for the function $\psi=(\psi_1,\psi_2):= (-\varphi_2,-\varphi_1)$ that $\psi_1 (0)=-x^*<-y^* = \psi_1 (1)$. Thus it follows from Proposition~\ref{prop}(i) that $\psi (t) \notin \psi (1) +\R^2_+\;\text{ for all }\;t\in [0,1[\;\text{ near }\;\bar t=1,$ i.e., we get
\begin{equation}\label{contra2}
\varphi (t)\notin \varphi(1) -\R^2_+\;\text{ whenever }\;t\in [0,1[\;\text{ near }\;\bar t=1.
\end{equation}
Since $\varphi:[0,1]\rightarrow \gph T$ is continuous, the conditions in \eqref{contra1} and \eqref{contra2} contradict the local monotonicity of $T$ around $\varphi (1)\in \gph T$, which verifies the claim of the theorem.
\end{proof}

\begin{Remark}\rm 
Recall from classical real analysis (see, e.g.,  \cite[Section~3.6]{Basic-RA}) that for a real-valued function $f:\R \rightarrow \R$, $f$ is nondecreasing on $\R$ if and only if it is nondecreasing around every point $x_0\in \R$, i.e., for every $x_0\in \R$ there exists $\delta>0$ such that $f(x)\le f(x_0)\le f(y)$ whenever $x\in ]x_0-\delta,x_0[$ and $y\in ]x_0,x_0+\delta[$. 

In the case of univariate single-valued mappings $T:\R\rightarrow \R$, the imposed path-connectedness of $\gph T$ in Theorem~\ref{main} is equivalent to the continuity of $T$ itself; see, e.g., \cite[Example~4, Section~22]{ross}. Under continuity, the local monotonicity of $T$ around $(x_0,T(x_0))$ automatically implies its nondecreasing property around $x_0$, and thus our results in Theorem~\ref{theo:single} and Theorem~\ref{main} are weaker than the classical one when restricting ourselves to single-valued function from $\R$ to $\R$. On the other hand, Theorem~\ref{theo:single} and Theorem~\ref{main} hold in the framework of topological vector spaces and set-valued mappings, respectively.
\end{Remark}

\begin{Remark} \rm Observe that
Theorem~\ref{main} {\em fails} if the path-connectedness of $\gph T$ is replaced by the convexity of $\dom T$, which is strictly weaker in the univariate setting under consideration. Indeed, consider the operator $T:\R \rightrightarrows \R$ defined by 
\begin{equation}\label{1}
T(x):= \begin{cases}
\{0\} &\text{if }\;x\le 0, \\
\{-1\} &\text{if }\;x > 0
\end{cases}
\end{equation}
whose graph is depicted as follows:
\begin{center}
\includegraphics[scale=0.6]{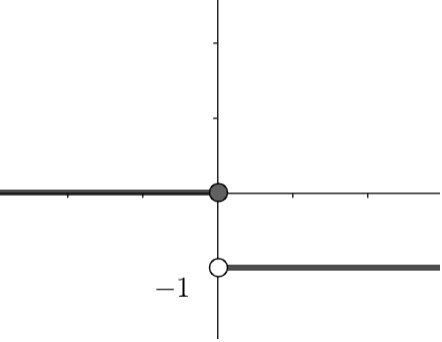}
\end{center}
In this case, $\dom T=]-\infty,\infty[$ is convex and $T$ is locally monotone around any point in its graph. Nevertheless, $T$ is not globally monotone.
\end{Remark}

\section{Local Monotonicity Is Not Global on the Plane}\label{sec:not global}

In this section, we show that the result of Theorem~\ref{main} cannot be extended to multidimensional settings by constructing a set-valued operator with the path-connected graph in $\R^4$, which is locally monotone around any point of its graph while not being globally monotone on $\R^2$. 

\begin{Example}\label{exam:Legaz}
There exists an operator $T:\R^2 \rightrightarrows \R^2$ such that $\gph T$ is path-connected, $T$ is locally monotone around any point on its graph, but $T$ is not globally monotone on $\R^2$.
\end{Example}
\begin{proof}
Consider the set-valued mapping $T:\R^2 \rightrightarrows \R^2 $ defined by 
\begin{equation}\label{T}
T(x,y):=
\begin{cases}
\Big\{\big(0,\max\{3x-1,0\}\big)\Big\} &\mbox{ if }\;x\ge 0,\;y=0, \\
\{(-y,0)\} &\mbox{ if }\;x=0,\;y\ge 0,\\
\emptyset &\mbox{ otherwise}.
\end{cases}
\end{equation}
The graph of $T$ is split into two parts of the space $\R^4$, which is the collection of quadruples $(x,y,z,t)$. The first part of $\gph T$ is represented by 
$$
T(x,0)=\Big\{\big(0,\max\{3x-1,0\}\big)\Big\},\quad x\ge 0,
$$
and lies within the $xt$-plane as shown below:
\begin{center}
\includegraphics[scale=0.6]{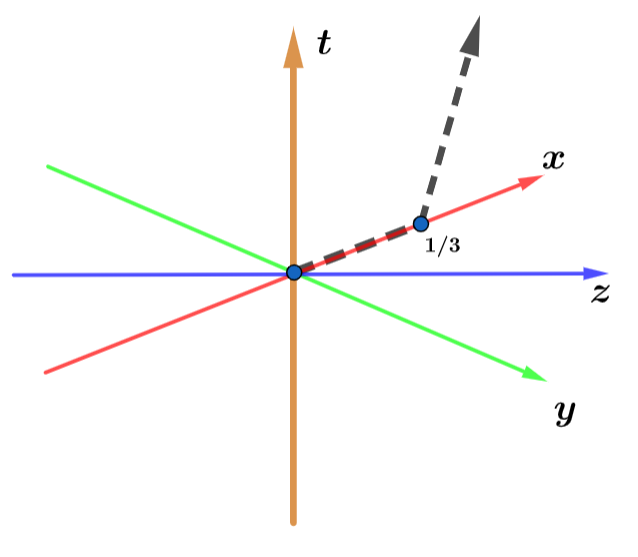}
\end{center}
The second part of the graph is given by
$$
T(0,y)=\{(-y,0)\},\quad y\ge 0
$$
staying inside the $yz$-plane as follows:
\begin{center}
\includegraphics[scale=0.6]{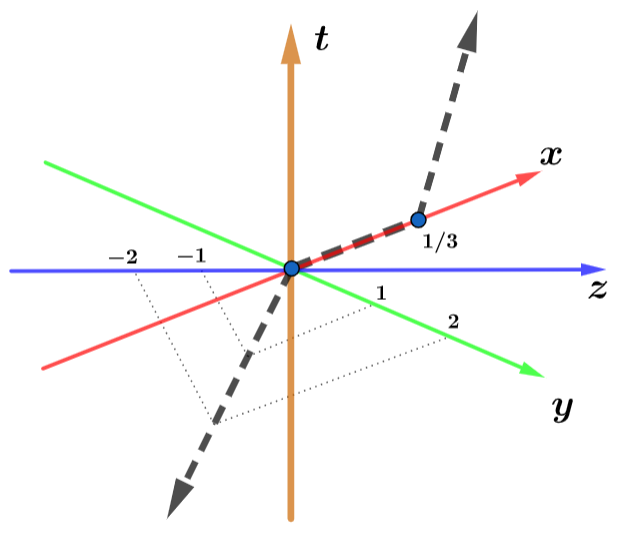}
\end{center}

It is clear to see that the set $\gph T$ is path-connected. Let us show that $T$ is locally monotone around any point of its graph. This is furnished in the following three steps:
\quad
\begin{enumerate}

\item[\bf Step~1.] First we check that $T$ is monotone relative to the open set $W_1:= ]0,\infty[ \times \R \times \R \times \R$. Indeed, consider $\big((x_i,y_i),(z_i,t_i)\big)\in \gph T \cap W_1$, $i=1,2$, and deduce from the definition of $T$ that
\begin{equation*}
(x_i,y_i,z_i,t_i) = (x_i,0,0,t_i), \quad i=1,2,
\end{equation*}
with $x_i>0$. This tells us therefore that
\begin{equation*}
\langle (x_1,y_1)-(x_2,y_2),(z_1,t_1)-(z_2,t_2)\rangle = \big\langle (x_1-x_2,0),(0,t_1-t_2) \big\rangle = 0.
\end{equation*}
The given calculations ensure the monotonicity of $T$ relative to $W_1$ and also the local monotonicity of $T$ around any point in $\gph T$ with the positive $x$-coordinate.

\item[\bf Step~2.] Here we show that $T$ is monotone relative to the open set $W_2:=\R\times ]0,+\infty[ \times \R \times \R$. To proceed, consider the pairs $\big((x_i,y_i),(z_i,t_i)\big)\in \gph T \cap W_2$, $i=1,2$, and deduce from the definition of $T$ that
\begin{equation*}
(x_i,y_i,z_i,t_i) = (0,y_i,z_i,0), \quad i=1,2,
\end{equation*}
with $y_i>0$. Thus we arrive at the equalities 
\begin{equation*}
\la (x_1,y_1)-(x_2,y_2),(z_1,t_1)-(z_2,t_2)\ra = \big\la (0,y_1-y_2),(z_1-z_2,0) \big\ra = 0.
\end{equation*}
The obtained calculations tell us that the operator $T$ is locally monotone around any point $(x,y,z,t)\in \gph T$ with $y>0$.

\item[\bf Step~3.] Let us check next that $T$ is monotone relative to the (open) neighborhood $W_3:=B_{1/3}(0)\times \R \times \R \times \R$ of $(0,0,0,0)$. Considering the pairs $\big((x_i,y_i),(z_i,t_i)\big)\in \gph T \cap W_3$, $i=1,2$, we clearly have that $x_1,x_2\ge 0$ and $t_1=t_2=0$. If $x_1=x_2$, then 
\begin{equation*}
\big\la (x_1,y_1)-(x_2,y_2),(z_1,t_1)-(z_2,t_2)\ra = \la (0,y_1-y_2),(z_1-z_2,0)\big\ra =0.
\end{equation*}
Otherwise, it remains to consider the case where $x_1>x_2\ge 0$. If $x_2>0$, then it reduces to Step~1. For $x_1>0$ and $x_2=0$, we get
\begin{equation*}
 (x_1,y_1,z_1,t_1)=(x_1,0,0,0),\ \text{ and } \ (x_2,y_2,z_2,t_2)= (0,y_2,-y_2,0),\;\;y_2\ge 0.
 \end{equation*}
Therefore, we arrive at the condition
\begin{equation*}
\la (x_1,y_1)-(x_2,y_2),(z_1,t_1)-(z_2,t_2)\ra =\la (x_1,-y_2),(y_2,0)\ra = x_1y_2 \ge 0.
\end{equation*}
This guarantees the monotonicity of $T$ relative to $W_3$, and thus the local monotonicity of $T$ around $(0,0,0,0)$.
\end{enumerate}

However, $T$ is not globally monotone. This is easily seen via the computation
\begin{equation*}
\la (1,0)-(0,1),T(1,0)-T(0,1)\ra = \la (1,0)-(0,1),(0,2)-(-1,0)\ra = \la (1,-1),(1,2)\ra = 1-2=-1<0.
\end{equation*}
\end{proof}

\section{Relationships between Local and Global Maximal Monotonicity in Hilbert Spaces}\label{sec:max}

This section addresses the {\em maximality} aspects of global and local monotonicity of set-valued operators in Hilbert spaces. The approach developed here is completely different from our derivations in the previous sections. We are now based on the machinery of {\em variational analysis} and {\em generalized differentiation} of set-valued mappings and employ the powerful {\em coderivative criteria} for global and local maximal monotonicity.

First we recall some tools of variational analysis used in what follows; see, e.g., the books \cite{Mordukhovich18,mor24,Rockafellar98} for more details and references in finite and infinite dimensions.

Given a nonempty set $\O$ of a Hilbert space $X$, the {\em regular normal cone} to $\O$ at $x\in\O$ is
\begin{equation*}
\Hat{N}(x;\Omega):=\Big\{x^*\in X\;\Big|\;\limsup_{u\overset{\Omega}{\rightarrow}x}\frac{\langle
x^*,u-x\rangle}{\|u-x\|}\le 0\Big\}. 
\end{equation*} 

The {\em regular coderivative} of a set-valued mapping $F\colon X\tto X$ at $(\ox,\oy)\in\gph F$ is defined as 
\begin{equation}\label{reg-cod} 
\Hat
D^*F(\ox,\oy)(y^*):=\big\{x^*\in X\;\big|\;(x^*,-y^*)\in\Hat N((\ox,\oy);\gph F)\big\},\quad y^*\in X.
\end{equation}

A multifunction $T:X\rightrightarrows X$ is called (globally) {\em hypomonotone} if there exists a constant $r>0$ such that the mapping $T+rI$ is monotone on $X$, i.e., 
\begin{equation}\label{hypo}
\la y_1-y_2,x_1-x_2\ra \ge -r\|x_1-x_2\|^2 \quad \text{for all}\quad (x_1,y_1),(x_2,y_2)\in \gph T.
\end{equation}
Being strictly weaker than hypomonotonicity, the {\em local hypomonotonicity} property of $T:X\rightrightarrows X$ around $(x,y)\in \gph T$ refers to the fulfillment of \eqref{hypo} within some neighborhood of $(x,y)$.\vspace*{0.05in}

Now we formulate the following two coderivative characterizations of global and local maximal monotonicity on which our equivalence result below is based. Since the framework of this section is Hilbert spaces, there is no difference between the local maximal monotonicity of types (A) and  (B), which are referred to as ``local maximal monotonicity."

\begin{Lemma}\label{lem:code-max1}{\rm {\bf (see \cite[Theorem~3.2]{CBN})}} Let $T:X\rightrightarrows X$ be a set-valued mapping of closed graph. Then we have the equivalent assertions:

{\bf(i)} $T$ is maximal monotone on $X$.

{\bf(ii)}  $T$ is globally hypomonotone on $X$ with some modulus $r>0$, and the regular coderivative $\widehat{D}^* T(u,v)$ is positive-semidefinite on $X$ in the sense that
\begin{eqnarray*}
\la z,w \ra \ge 0\;\text{ for any }\;z\in \widehat{D}^* T(u,v)(w),\;(u,v)\in\gph T,\;\mbox{ and }\;w\in X.
\end{eqnarray*}
\end{Lemma}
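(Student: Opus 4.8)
The plan is to treat the two implications separately: (i)$\Rightarrow$(ii) rests on Minty's parametrization of maximal monotone operators, while (ii)$\Rightarrow$(i) is carried out by analysing the resolvent-type mapping $(I+\lambda T)^{-1}$ for small $\lambda>0$.

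For (i)$\Rightarrow$(ii), global hypomonotonicity with an arbitrary modulus $r>0$ is automatic, since a monotone operator satisfies $\la y_1-y_2,x_1-x_2\ra\ge0\ge-r\|x_1-x_2\|^2$. For the coderivative assertion I would invoke Minty's theorem: when $T$ is maximal monotone, the orthogonal change of variables $L(u,v):=\tfrac{1}{\sqrt2}(u+v,\,v-u)$ on $X\times X$ sends $\gph T$ onto the graph of a single-valued nonexpansive mapping $S\colon X\to X$ defined on all of $X$, the single-valuedness and $1$-Lipschitz bound coming from monotonicity and the full domain from $\rge(I+T)=X$. Since $L$ is orthogonal it carries regular normals to regular normals, i.e. $\widehat N(L(u,v);\gph S)=L\,\widehat N((u,v);\gph T)$, and since $S$ is $1$-Lipschitz its regular coderivative obeys $\|a^*\|\le\|b^*\|$ for every $a^*\in\widehat D^*S(\bar a)(b^*)$. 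Taking $z\in\widehat D^*T(u,v)(w)$, that is $(z,-w)\in\widehat N((u,v);\gph T)$, and pushing this vector through $L$ turns the bound $\|a^*\|\le\|b^*\|$ into $\|z-w\|\le\|z+w\|$, which upon squaring is precisely $\la z,w\ra\ge0$.

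For (ii)$\Rightarrow$(i), fix $\lambda\in\,]0,1/r[$ and set $P_\lambda:=(I+\lambda T)^{-1}$. Global hypomonotonicity \eqref{hypo} gives $\la q_1-q_2,u_1-u_2\ra\ge(1-\lambda r)\|u_1-u_2\|^2$ whenever $q_i\in u_i+\lambda T(u_i)$, so $P_\lambda$ is single-valued and Lipschitz on $\dom P_\lambda=\rge(I+\lambda T)$; and $\dom P_\lambda$ is closed, because a Cauchy sequence in the range lifts along the Lipschitz map $P_\lambda$ to a Cauchy sequence of preimages whose limit lies in the (closed) graph of $I+\lambda T$. The exact coderivative sum rule for the smooth (linear) summand $I$ yields $\widehat D^*(I+\lambda T)(u,q)(w)=w+\lambda\,\widehat D^*T(u,v)(w)$ with $v=(q-u)/\lambda$, and combining it with the positive-semidefiniteness hypothesis produces two facts valid at every point of $\gph(I+\lambda T)$: first, $0\in\widehat D^*(I+\lambda T)(u,q)(w)$ forces $0=\|w\|^2+\lambda\la z',w\ra\ge\|w\|^2$, hence $w=0$; second, via the inverse-coderivative rule, $\la u^*,p^*\ra\ge\|p^*\|^2$ for every $p^*\in\widehat D^*P_\lambda(q,u)(u^*)$. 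The first fact, once lifted to the limiting (mixed) coderivative using that semidefiniteness holds \emph{uniformly} along $\gph T$, gives $\ker D^*(I+\lambda T)(u,q)=\{0\}$ everywhere, so by the Mordukhovich criterion $I+\lambda T$ is metrically regular around each point of its graph; therefore $\dom P_\lambda$ is open, and being nonempty, closed and open in the connected space $X$, it equals $X$. The second fact shows that the reflected resolvent $R_\lambda:=2P_\lambda-I$ satisfies $\widehat D^*R_\lambda(q)(w)\subseteq\{\rho^*:\|\rho^*\|\le\|w\|\}$ --- indeed $\rho^*=2p^*-w$ and $\|2p^*-w\|^2=4\|p^*\|^2-4\la p^*,w\ra+\|w\|^2\le\|w\|^2$ --- so the coderivative criterion for Lipschitz continuity renders $R_\lambda$ globally nonexpansive, i.e. $P_\lambda$ firmly nonexpansive, i.e. $\lambda T$ and hence $T$ monotone. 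With $T$ monotone and $\rge(I+\lambda T)=X$, Minty's theorem concludes that $\lambda T$, and therefore $T$, is maximal monotone.

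The step I expect to be the main obstacle is the passage, inside (ii)$\Rightarrow$(i), from the \emph{regular} coderivative that the hypothesis controls to the \emph{limiting} (mixed) coderivative demanded by the metric-regularity and Lipschitz-continuity criteria, together with upgrading the resulting infinitesimal properties to their global counterparts. In a Hilbert (hence Asplund) space both are achievable: the decisive point is that positive-semidefiniteness is assumed at \emph{every} graph point, so the relevant inequalities persist through the weak limits defining the limiting coderivative (using weak lower semicontinuity of the norm), and $X$ is connected so that local openness and local Lipschitz estimates globalize. Carrying out these passages rigorously is the technical core of the result, supplied by \cite[Theorem~3.2]{CBN}.
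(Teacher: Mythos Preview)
The paper does not supply its own proof of this lemma; it is quoted as a black box from \cite[Theorem~3.2]{CBN} and invoked in the proof of Theorem~\ref{theo:max}. So there is no in-paper argument to compare your sketch against.

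That said, your outline is a faithful reconstruction of how the result in \cite{CBN} is established, and the architecture is correct: Minty parametrization for (i)$\Rightarrow$(ii), and for (ii)$\Rightarrow$(i) the resolvent $P_\lambda=(I+\lambda T)^{-1}$ with $\lambda\in\,]0,1/r[$, coderivative-based metric regularity to force $\dom P_\lambda=X$, and a firm-nonexpansiveness argument to recover monotonicity and then Minty for maximality. Your derivation of $\langle u^*,p^*\rangle\ge\|p^*\|^2$ for $p^*\in\widehat D^*P_\lambda(q,u)(u^*)$ and of $\|2p^*-w\|\le\|w\|$ is correct, and you correctly identify the regular$\to$limiting passage as the technical heart. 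For the metric-regularity step your justification is right: in the mixed coderivative one has $w_k\to w$ strongly and $z_k\rightharpoonup z$ weakly, so $\langle z_k,w_k\rangle\to\langle z,w\rangle$ and the inequality $\langle z_k,w_k\rangle\ge\|w_k\|^2$ survives in the limit. The Lipschitz step for $R_\lambda$ is the one place your sketch is genuinely thin: the bound $\|\rho^*\|\le\|w\|$ on $\widehat D^*R_\lambda$ does \emph{not} pass to the normal coderivative by weak lower semicontinuity alone, since in that construction both $\rho^*_k$ and $w_k$ need only converge weakly and one cannot control $\liminf\|w_k\|$ from above. One has to argue differently here---for instance, exploiting that $P_\lambda$ is already known to be globally Lipschitz to upgrade convergence, or using a scalarization/mean-value device---and this is precisely what \cite{CBN} supplies.
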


\begin{Lemma}\label{lem:code-max2}
{\rm {\bf (see \cite[Theorem~6.4]{KKMP23})}}
Let $T:X\rightrightarrows X$ be of locally closed graph around $(\ox,\oy)\in \gph T$. Then the following assertions are equivalent:

{\bf(i)} $T$ is locally maximal monotone around $(\ox,\oy)$.

{\bf(ii)} $T$ is locally hypomonotone around $(\ox,\oy)$ with some modulus $r>0$, and there exists a neighborhood $W$ of $(\ox,\oy)$ such that
\begin{eqnarray*}
\la z,w\ra \ge 0\ \text{ whenever }\ z\in \Hat{D}^* T(u,v)(w)\ \text{ and } \ (u,v)\in W\cap \gph T,\; w\in X.
\end{eqnarray*}
\end{Lemma}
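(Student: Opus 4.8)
\textbf{Proof proposal for Lemma~\ref{lem:code-max2}.}

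The plan is to cite the reference \cite[Theorem~6.4]{KKMP23} as the source of this equivalence, and to sketch how the argument is organized there, adapting it to the present notation. The implication (i)$\Rightarrow$(ii) is the easier direction. If $T$ is locally maximal monotone around $(\ox,\oy)$, then by the type~(B) definition there is a maximal monotone operator $\overline T\colon X\tto X$ and a neighborhood $W_0$ of $(\ox,\oy)$ with $\gph\overline T\cap W_0=\gph T\cap W_0$. First I would invoke Lemma~\ref{lem:code-max1} applied to $\overline T$ (whose graph is closed, being that of a maximal monotone operator on a Hilbert space): this yields a modulus $r>0$ for which $\overline T+rI$ is monotone globally, and the positive-semidefiniteness of $\widehat D^*\overline T(u,v)$ at every graph point. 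Restricting to $W_0$, and using the fact that both the hypomonotonicity inequality \eqref{hypo} and the regular coderivative \eqref{reg-cod} are defined through a local examination of the graph near the base point, I would transfer these two properties from $\overline T$ to $T$ on a possibly smaller neighborhood $W\subset W_0$; the key observation is that $\gph T$ and $\gph\overline T$ coincide inside $W_0$, so $\widehat N((u,v);\gph T)=\widehat N((u,v);\gph\overline T)$ for $(u,v)$ interior to $W_0$, and hence the coderivatives agree there. This gives (ii).

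For the converse (ii)$\Rightarrow$(i), the idea is to build a maximal monotone operator agreeing with $T$ near $(\ox,\oy)$. Starting from the local hypomonotonicity modulus $r>0$ and the neighborhood $W$ furnished by (ii), I would consider the shifted mapping $S:=T+rI$, which is locally monotone around $(\ox,\oy+r\ox)$ with a positive-semidefinite regular coderivative on a neighborhood of that point (note $\widehat D^*S(u,v+ru)(w)=\widehat D^*T(u,v)(w)+rw$, so $\la z',w\ra=\la z,w\ra+r\|w\|^2\ge 0$). The heart of the matter is then a \emph{localization-then-globalization} argument: one shows that a locally monotone, locally closed-graph operator whose regular coderivative is positive-semidefinite near the reference point can be extended to a (globally) maximal monotone operator whose graph coincides with that of $S$ in a neighborhood. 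This is exactly where the coderivative criterion of Lemma~\ref{lem:code-max1} is used in reverse: one truncates $\gph S$ to the neighborhood, applies a monotone extension / Zorn's-lemma completion to a maximal monotone $\widehat S$, and then argues that the coderivative condition forces $\gph\widehat S$ to agree with $\gph S$ locally (otherwise a nearby graph point of $\widehat S$ outside $\gph S$ would violate positive-semidefiniteness, via the normal-cone characterization). Undoing the shift by $-rI$ produces the desired maximal monotone $\overline T$ with $\gph\overline T\cap W'=\gph T\cap W'$, which is precisely local maximal monotonicity of type~(B).

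The main obstacle is this last extension step: passing from a \emph{local} coderivative/monotonicity condition to a \emph{global} maximal monotone operator is delicate, because naive truncation of the graph destroys maximality, and one must control what happens on the boundary of the neighborhood. The technical device that makes it work in \cite{KKMP23} is to combine the hypomonotone shift with a careful choice of neighborhood so that the truncated graph is itself monotone and ``closed enough'' to admit a maximal monotone hull that does not spill new points into the neighborhood; verifying that the coderivative inequality is preserved under this hull, and that it rules out spurious graph points, is where most of the work lies. Since this is established in full in \cite[Theorem~6.4]{KKMP23}, here I would simply refer to that result for the complete proof.
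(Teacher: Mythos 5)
Your proposal is in effect the same as what the paper does: the paper states this lemma purely as a citation of \cite[Theorem~6.4]{KKMP23} and offers no proof of its own, and you likewise defer to that reference for the substantive extension argument. Your added sketch (including the correct shift formula $\Hat{D}^*S(u,v+ru)(w)=\Hat{D}^*T(u,v)(w)+rw$ for $S=T+rI$ and the transfer of coderivative data through the local coincidence of $\gph T$ and $\gph\overline{T}$) is a reasonable outline consistent with that source, so no gap needs to be flagged here.
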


Utilizing these lemmas, we arrive at the equivalence between global and local maximal monotonicity of closed-graph operators under the global hypomonotonicity assumption. 

\begin{Theorem}\label{theo:max}
Let $T:X\rightrightarrows X$ be a globally hypomonotone set-valued mapping defined on a Hilbert space $X$. Assume that the graph of $T$ is closed. Then the following assertions are equivalent:

{\bf(i)} $T$ is maximal monotone on $X$.

{\bf(ii)} $T$ is locally maximal monotone around any point in its graph.
\end{Theorem}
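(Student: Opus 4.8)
The plan is to exploit the two coderivative characterizations in Lemma~\ref{lem:code-max1} and Lemma~\ref{lem:code-max2}, observing that they share essentially the same analytic content---positive-semidefiniteness of the regular coderivative $\Hat D^*T$---while the hypomonotonicity hypotheses differ only in being global versus local. Since global hypomonotonicity is assumed outright and trivially implies local hypomonotonicity around every graph point, the entire argument reduces to transferring the positive-semidefiniteness condition between its global and local forms.

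The implication (i)$\Ra$(ii) is the easy direction. Assume $T$ is maximal monotone on $X$. By Lemma~\ref{lem:code-max1}, $T$ is globally hypomonotone (which we already know) and $\la z,w\ra\ge 0$ holds for every $z\in\Hat D^*T(u,v)(w)$, every $(u,v)\in\gph T$, and every $w\in X$. Now fix an arbitrary $(\ox,\oy)\in\gph T$. Since $\gph T$ is closed, it is in particular locally closed around $(\ox,\oy)$. The coderivative inequality, being valid at all graph points, holds a fortiori at all $(u,v)$ in any neighborhood $W$ of $(\ox,\oy)$; and local hypomonotonicity around $(\ox,\oy)$ follows from the global one. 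Hence condition (ii) of Lemma~\ref{lem:code-max2} is satisfied, and that lemma yields that $T$ is locally maximal monotone around $(\ox,\oy)$. As $(\ox,\oy)$ was arbitrary, (ii) holds.

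For the implication (ii)$\Ra$(i), suppose $T$ is locally maximal monotone around every point of its graph. Fix any $(u,v)\in\gph T$ and any $w\in X$, and take $z\in\Hat D^*T(u,v)(w)$; the goal is to show $\la z,w\ra\ge 0$. Applying Lemma~\ref{lem:code-max2} at the point $(u,v)$ (using local closedness of $\gph T$ there, which follows from closedness of $\gph T$) produces a neighborhood $W$ of $(u,v)$ on which the coderivative positive-semidefiniteness holds; in particular it holds at $(u,v)$ itself, giving $\la z,w\ra\ge 0$. Since $(u,v)$, $w$, and $z$ were arbitrary, the regular coderivative $\Hat D^*T$ is positive-semidefinite at every graph point. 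Combined with the standing assumption that $T$ is globally hypomonotone, condition (ii) of Lemma~\ref{lem:code-max1} is met, and that lemma delivers the global maximal monotonicity of $T$ on $X$.

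The argument is essentially bookkeeping once the two lemmas are in hand, so there is no serious obstacle; the one point requiring care is that the coderivative condition in Lemma~\ref{lem:code-max2}(ii) is stated ``on a neighborhood,'' and one must note that the property at the base point is enough to feed back into Lemma~\ref{lem:code-max1}(ii). It is precisely here that global hypomonotonicity does the real work: without it, local maximal monotonicity everywhere would still give a pointwise-positive-semidefinite coderivative but no global hypomonotone modulus, and the passage to global maximal monotonicity would break down---consistent with the remark, made elsewhere in the paper, that the hypomonotonicity hypothesis is essential.
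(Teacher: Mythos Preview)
Your proof is correct and follows essentially the same route as the paper for the substantive direction (ii)$\Ra$(i): apply Lemma~\ref{lem:code-max2} at each graph point to obtain positive-semidefiniteness of $\Hat D^*T$ everywhere, then feed this together with global hypomonotonicity into Lemma~\ref{lem:code-max1}. The only minor difference is that for (i)$\Ra$(ii) the paper simply invokes the definitions (take $\overline{T}=T$ in the type-(B) formulation), whereas you run the coderivative argument in reverse---valid but more machinery than needed.
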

\begin{proof}
Implication (i)$\Longrightarrow$(ii) obviously follows from the definitions. To verify (ii)$\Longrightarrow$(i), we get from Lemma~\ref{lem:code-max2} due to the local monotonicity of $T$ around any $(x,y)\in\gph T$ that 
\begin{equation}\label{code-charac}
\la z,w\ra \ge 0 \ \text{ whenever }\ z\in \widehat{D}^* T(u,v)(w) \ \text{ and }\ (u,v)\in \gph T,\; w\in X.
\end{equation}
Then Lemma~\ref{lem:code-max1} implies by \eqref{code-charac} and the assumed graph-closedness and global hypomonotonicity of $T$ that the operator $T$ is maximal monotone on $X$ as claimed.
\end{proof}

Let us demonstrate that all the assumptions of Theorem~\ref{theo:max} are essential for the fulfillment of the equivalence conclusion even for simple operators from $\R$ to $\R$. 

\begin{Remark}\rm 
\quad 
\begin{enumerate}
\item The graph-closedness assumption in Theorem~\ref{theo:max} cannot be dropped. Indeed, the mapping $T:\R \rightrightarrows \R$ defined by $T(x):=\{0\}$ if $-1<x<1$ and $T(x):=\emptyset$ otherwise, is globally monotone and locally maximal monotone around any point of its graph. However, $T$ is not maximal monotone since $\gph T$ is properly contained in $\gph S=\gph T \cup \{(1,0)\}$, which is monotone. This happens because the set $\gph T=]-1,1[\times \{0\}$ is not closed in $\R^2$.

\item Implication (ii)$\Longrightarrow$(i) in Theorem~\ref{theo:max} fails when $T$ is not hypomonotone on $X$. For instance, consider the multifunction
\begin{equation*}
T(x):=\begin{cases}
\R &\text{if }x\in\{0,1\}, \\
\emptyset &\text{otherwise.}
\end{cases}
\end{equation*}
Although $\gph T=\{0,1\}\times \R$ is closed and the local maximal monotonicity of $T$ around any graph point holds, we have that $T+rI=T$ is not monotone for any $r>0$, and thus $T$ is not hypomonotone. Consequently, it cannot be maximal monotone. 
\end{enumerate}    
\end{Remark}

Finally, we present a consequence of Theorems~\ref{theo:max} and \ref{theo:single} for operators on Hilbert spaces that are single-valued on their domains.

\begin{Corollary}\label{coro:combine} Let $T\colon \dom T\to X$ be of convex domain and closed graph. Assume in addition 
$T$ is continuous relative to any segment in $\dom T$. Then we have the equivalent assertions:

{\bf(i)} $T$ is maximal monotone.

{\bf(ii)} $T$ is locally maximal monotone around any point in its graph.
\end{Corollary}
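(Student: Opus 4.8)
The plan is to obtain the nontrivial implication (ii)$\Rightarrow$(i) by chaining Theorem~\ref{theo:single} with Theorem~\ref{theo:max}, after reinterpreting the single-valued operator $T\colon\dom T\to X$ as the set-valued mapping $X\rightrightarrows X$ that assigns the singleton $\{T(x)\}$ to each $x\in\dom T$ and $\emptyset$ otherwise (here $X$ is the ambient Hilbert space of this section). The reverse implication (i)$\Rightarrow$(ii) is immediate from the definitions: if $T$ is maximal monotone on $X$, then taking $\overline T:=T$ and any neighborhood $W$ of a graph point shows that $T$ is locally maximal monotone (of type (B)) around that point.

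First I would record that local maximal monotonicity is stronger than plain local monotonicity. Indeed, if $T$ is locally maximal monotone around $(u,u^*)\in\gph T$, then by definition there are a maximal monotone mapping $\overline T$ and a neighborhood $W$ of $(u,u^*)$ with $\gph\overline T\cap W=\gph T\cap W$; since $\overline T$ is monotone, $T$ is monotone relative to $W$, i.e., locally monotone around $(u,u^*)$. Thus assertion (ii) yields that $T$ is locally monotone around every point of its graph.

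Next I would invoke Theorem~\ref{theo:single}: since $X$ is a locally convex topological vector space, $\dom T$ is convex, and $T$ is continuous relative to every segment in $\dom T$, the local monotonicity of $T$ around every graph point is equivalent to the global monotonicity of $T$ on $X$. Hence $T$ is globally monotone, and therefore trivially globally hypomonotone: for every $r>0$ one has $\la y_1-y_2,x_1-x_2\ra\ge 0\ge -r\|x_1-x_2\|^2$ for all $(x_1,y_1),(x_2,y_2)\in\gph T$, so \eqref{hypo} holds.

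Finally, $T$ is globally hypomonotone, has closed graph by hypothesis, and is locally maximal monotone around every point of its graph by (ii); consequently Theorem~\ref{theo:max} applies and delivers the maximal monotonicity of $T$ on $X$, which is assertion (i). The only mildly delicate points are the reduction of the single-valued-on-domain setting to the set-valued framework of Theorem~\ref{theo:max} and the passage from local maximal monotonicity to plain local monotonicity; both are routine, so I do not expect a genuine obstacle — the substantive work has already been done in Theorems~\ref{theo:single} and~\ref{theo:max}.
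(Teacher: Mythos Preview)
Your proposal is correct and follows essentially the same approach as the paper's own proof: deduce local monotonicity from local maximal monotonicity, apply Theorem~\ref{theo:single} to obtain global monotonicity (hence global hypomonotonicity), and then invoke Theorem~\ref{theo:max} together with the closed-graph assumption to conclude maximal monotonicity. The paper's argument is slightly terser but substantively identical.
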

\begin{proof}
By the definitions given in Section~\ref{sec:monotone}, it suffices to verify implication (ii)$\Longrightarrow$(i). Since $\dom T$ is convex and $T$ is continuous relative to any segment in its domain, the assumed local maximal monotonicity of $T$ around any graph point implies that $T$ is globally monotone on $X$ via Theorem~\ref{theo:single}. In particular, $T$ is globally hypomonotone. Employing then Theorem~\ref{theo:max} given that $\gph T$ is closed, we immediately get the maximal monotonicity of the mapping $T$.
\end{proof}

\section{Conclusions and Future Research}\label{sec:conc}

This paper contributes to the study of local monotonicity of set-valued mappings and to establishing its relationships with global monotonicity including the maximality versions. In particular, we tackle the problem of closing the gap between local and global (maximal) monotonicity. To this end, some verifiable sufficient conditions are presented in rather nice frameworks. On the flip side, it is worth-mentioning that Example~\ref{exam:Legaz} clearly distinguishes between local monotonicity and global monotonicity in the framework of multidimensional Euclidean spaces. Thus the study of local monotonicity in such cases is indispensable.

In the future research, we aim to extend our results obtained for single-valued mappings to general settings of multifunctions, as well as to relax the imposed continuity assumptions on operators. In particular, it is still an open question whether one can replace the continuity of $T$ in Theorem~\ref{theo:single} by the path-connectedness (or stronger, arc-connectedness) of $\gph T$.\vspace*{0.1in}

{\bf Acknowledgements}: The authors thank Dat Ba Tran for his valuable discussions.
\vspace*{-0.1in}

\end{document}